\newtheorem{thm}{Theorem}[section]
\newtheorem{lem}[thm]{Lemma}
\newtheorem{rem}[thm]{Remark}
\newtheorem{cor}[thm]{Corollary}
\newtheorem{defn}[thm]{Definition}
\newcommand{\vertiii}[1]{{\left\vert\kern-0.25ex\left\vert\kern-0.25ex\left\vert #1 
    \right\vert\kern-0.25ex\right\vert\kern-0.25ex\right\vert}}
\numberwithin{equation}{section}
\title[Semilinear parabolic systems with space-time forcing terms]{Blow-up and global existence for semilinear parabolic systems with space-time forcing terms}
\author[AZ. Fino, M. Jleli, B. Samet]{Ahmad Z. Fino, Mohamed Jleli, Bessem Samet}
\subjclass[2010]{34A34; 35A01; 35B44}
\keywords{Inhomogeneous parabolic system; space-time forcing terms; blow-up; global existence}
\begin{document}

\maketitle

\begin{abstract}
We investigate the local existence,  finite time blow-up and  global existence of 
sign-changing solutions to the inhomogeneous parabolic system with space-time forcing terms
$$
u_t-\Delta u =|v|^{p}+t^\sigma w_1(x),\,\, v_t-\Delta v =|u|^{q}+t^\gamma w_2(x),\,\, (u(0,x),v(0,x))=(u_0(x),v_0(x)),
$$
where $t>0$, $x\in \mathbb{R}^N$, $N\geq 1$, $p,q>1$, $\sigma,\gamma>-1$, $\sigma,\gamma\neq0$, $w_1,w_2\not\equiv0$, and $u_0,v_0\in C_0(\mathbb{R}^N)$. For the finite time blow-up, two cases are discussed under the conditions $w_i\in L^1(\mathbb{R}^N)$ and $\int_{\mathbb{R}^N} w_i(x)\,dx>0$, $i=1,2$. Namely, if $\sigma>0$ or $\gamma>0$, we show that the (mild) solution $(u,v)$ to  the considered system blows up in finite time, while if  $\sigma,\gamma\in(-1,0)$, then a finite time blow-up occurs when $\frac{N}{2}< \max\left\{\frac{(\sigma+1)(pq-1)+p+1}{pq-1},\frac{(\gamma+1)(pq-1)+q+1}{pq-1}\right\}$. Moreover, if  $\frac{N}{2}\geq \max\left\{\frac{(\sigma+1)(pq-1)+p+1}{pq-1},\frac{(\gamma+1)(pq-1)+q+1}{pq-1}\right\}$, $p>\frac{\sigma}{\gamma}$ and $q>\frac{\gamma}{\sigma}$,
we show that the solution is global for  suitable initial values and $w_i$, $i=1,2$. 

 \end{abstract}

\section{Introduction}

This paper is concerned with the Cauchy problem for the inhomogeneous semilinear parabolic system with space-time forcing terms
\begin{eqnarray}\label{1}
\left\{
  \begin{array}{lll}
u_t-\Delta u &=&|v|^{p}+t^\sigma w_1(x),\quad x\in\mathbb{R}^N,t>0,\\
v_t-\Delta v &=&|u|^{q}+t^\gamma w_2(x),\quad x\in\mathbb{R}^N,t>0,
  \end{array}
\right.
\end{eqnarray}
supplemented with the initial conditions
\begin{equation}\label{Initialcondition1}
(u(0,x),v(0,x))=(u_0(x),v_0(x)),\quad x\in\mathbb{R}^N,
\end{equation}
where $u_0,v_0\in C_0(\mathbb{R}^N),$ $N\geq 1$, $p,q>1$, $\sigma,\gamma>-1$, $\sigma,\gamma\neq0$, and $w_1,w_2\not\equiv0$. 
Here, $C_0(\mathbb{R}^N)$ denotes the set of all continuous functions decaying to zero at infinity. Namely, we investigate the finite time blow-up and global existence of sign-changing solutions to  problem \eqref{1}--\eqref{Initialcondition1}.  We mention below some motivations for studying the considered problem. 

In the special case $w_1=w_2\equiv 0$ and $u,v\geq 0$,  \eqref{1} reduces to the homogeneous parabolic system
\begin{eqnarray}\label{HS}
\left\{
  \begin{array}{lll}
u_t-\Delta u &=&v^{p},\quad x\in\mathbb{R}^N,t>0,\\
v_t-\Delta v &=&u^{q},\quad x\in\mathbb{R}^N,t>0.
  \end{array}
\right.
\end{eqnarray}
Escobedo and Herrero \cite{EH} studied problem \eqref{HS}--\eqref{Initialcondition1}, where  $u_0,v_0\geq 0$,  bounded and continuous. It was shown that the critical exponent for this problem  is equal to $1+\frac{2}{N}(\max\{p,q\}+1)$, i.e.
\begin{itemize}
\item[(a)] if $1<pq\leq 1+\frac{2}{N}(\max\{p,q\}+1)$, then any nontrivial solution to \eqref{HS}--\eqref{Initialcondition1} blows up in finite time;
\item[(b)] if $pq>1+\frac{2}{N}(\max\{p,q\}+1)$, then \eqref{HS}--\eqref{Initialcondition1} admits global solutions for  small  initial values.
\end{itemize}
Observe that  in the case $p=q$ and $u_0=v_0$, \eqref{HS}--\eqref{Initialcondition1} reduces to  a scalar Cauchy problem, namely
\begin{eqnarray}\label{FP}
\left\{
  \begin{array}{lll}
u_t-\Delta u &=&u^{p},\quad x\in\mathbb{R}^N,t>0,\\
u(x,0) &=&u_0(x),\quad x\in\mathbb{R}^N.
  \end{array}
\right.
\end{eqnarray}
From Fujita \cite{Fujita} (see also \cite{AW,KS}), it is knwon that,
\begin{itemize}
\item[(a)] if $1<p\leq 1+\frac{2}{N}$ and $u_0\geq 0$, then any nontrivial solution to \eqref{FP}--\eqref{Initialcondition1} blows up in finite time;
\item[(b)] if $p>1+\frac{2}{N}$ and $u_0>0$  is smaller than a small Gaussian, then  \eqref{FP}--\eqref{Initialcondition1} admits global positive solutions.
\end{itemize}
These are precisely the conditions obtained in \cite{EH} under the assumption $p=q$. The number $1+\frac{2}{N}$  is said to be critical in the sense of Fujita.  

In the special case $\sigma=\gamma=0$,  \eqref{1} reduces to 

\begin{eqnarray}\label{1-0}
\left\{
  \begin{array}{lll}
u_t-\Delta u &=&|v|^{p}+ w_1(x),\quad x\in\mathbb{R}^N,t>0,\\
v_t-\Delta v &=&|u|^{q}+ w_2(x),\quad x\in\mathbb{R}^N,t>0. 
  \end{array}
\right.
\end{eqnarray}
Problem \eqref{1-0}--\eqref{Initialcondition1} was investigated by Bandle et al. \cite{BLZ}. To prove global nonexistence, it was  assumed that for $i=1,2$,  
\begin{equation}\label{assB}
\int_{\mathbb{R}^N} w_i(x)\,dx>0,\quad \int_{|x|>R} \frac{w_i^-(y)}{|x-y|^{N-2}}\,dy =\frac{o(1)}{|x|^{N-2}},\,\, R\gg 1,
\end{equation}
where $w_i^\pm=\max\{\pm w,0\}$. Namely, it was shown that,
\begin{itemize}
\item[(a)] if $p \geq q > 1$ and $\frac{p(q+1)}{pq-1}>\frac{N}{2}$,
problem \eqref{1-0}--\eqref{Initialcondition1} possesses no global solution,  when both $w_1$ and $w_2$ satisfy \eqref{assB};
\item[(b)] if $p\geq q > 1$ and $\frac{p(q+1)}{pq-1}=\frac{N}{2}$, problem \eqref{1-0}--\eqref{Initialcondition1} possesses no global solution, when  either of $w_1$ or $w_2$ satisfies \eqref{assB}  and $u_0,v_0 \geq  0$;
\item[(c)]  if $p\geq q > 0$  and $\frac{p(q+1)}{pq-1}<\frac{N}{2}$, then problem \eqref{1-0}--\eqref{Initialcondition1} has global positive solutions whenever $w_1(x), w_2(x), u_0(x), v_0(x)$ are  all nonnegative and are bounded above by $\frac{\epsilon}{\left(1+|x|^{N+\tau}\right)}$ for some
$\tau>0$ and some sufficiently small $\epsilon > 0$.
\end{itemize}
Note that system \eqref{1-0} (with positive solutions) was  invetigated by Zhang \cite{ZhangV} in a non-compact complete Riemannian manifold. 
For other contributions related to inhomogeneous problems, see, for example \cite{CY,Y,Zeng} and the references therein.

Very recently, Jleli et al. \cite{JKS} studied the scalar  case of  problem \eqref{1}, namely 
\begin{eqnarray}\label{eqJKS}
\left\{\begin{array}{lll}
u_t-\Delta u &=&|u|^{p}+t^\sigma w(x), \quad x\in\mathbb{R}^N, t>0,\\
u(x,0)&=&u_0(x),\quad x\in \mathbb{R}^N,
\end{array}
\right.
\end{eqnarray}
where  $N\geq 2$, $p>1$, $\sigma>-1$, $\sigma\neq 0$ and $w\not\equiv 0$. 
When $\sigma>0$, it was shown that, if  $w \in C_0^\alpha(\mathbb{R}^N) \cap L^1(\mathbb{R}^N)$ for some $\alpha\in (0,1)$, $u_0\in C_0(\mathbb{R}^N)$ and $\int_{\mathbb{R}^N} w(x)\,dx>0$, then  for all $p>1$, the solution to \eqref{eqJKS} blows up in finite time. In the case $-1<\sigma<0$, it was proved that the critical exponent for \eqref{eqJKS} is equal to $\frac{N-2\sigma}{N-2-2\sigma}$ in the following sense:
\begin{itemize}
\item[(a)] if $1<p<\frac{N-2\sigma}{N-2-2\sigma}$ and $\int_{\mathbb{R}^N} w(x)\,dx>0$, then for any $u_0\in C_0(\mathbb{R}^N)$, the solution to \eqref{eqJKS} blows up in finite time;
\item[(b)] if $p\geq \frac{N-2\sigma}{N-2-2\sigma}$, then the solution to  \eqref{eqJKS} exists globally whenever $u_0\in C_0(\mathbb{R}^N)\cap L^d(\mathbb{R}^N)$ and  $w\in L^k(\mathbb{R}^N)$  are such that $\|u_0\|_{L^d}+\|w\|_{L^k}$ is sufficiently small, where $d=\frac{N(p-1)}{2}$ and $k=\frac{d}{p(\sigma+1)-\sigma}\cdot$
\end{itemize}
Motivated by the above contributions, in particular by \cite{JKS}, our goal in this paper is to study the corresponding system to \eqref{eqJKS}, namely 
problem \eqref{1}--\eqref{Initialcondition1}. 

Before stating our main results, we give the 

\begin{defn}[Mild solution]
Let $u_0,v_0,w_1,w_2\in C_0(\mathbb{R}^N)$, $\sigma,\gamma>-1$ and $T>0$.  We say that $(u,v)\in C([0,T], C_0(\mathbb{R}^N) \times C_0(\mathbb{R}^N))$ is a mild solution to 
\eqref{1}--\eqref{Initialcondition1}, if  
\begin{eqnarray}\label{MILD}
\left\{\begin{array}{lll}
 u(t)&=&S(t)u_0+\displaystyle\int_0^t S(t-s)|v(s)|^{p}\,ds+\int_0^t s^\sigma S(t-s)w_1\,ds,\quad 0\leq t\leq T,\\
\displaystyle v(t)&=&S(t)v_0+\displaystyle \int_0^t S(t-s)|u(s)|^{q}\,ds+\int_0^t s^\gamma S(t-s)w_2\,ds,\quad 0\leq t\leq T,
\end{array}
\right.
\end{eqnarray}
where $S(t)=e^{t\Delta}$ is the heat semigroup on $\mathbb{R}^N$.
\end{defn}

We first study the local existence of mild solutions to  \eqref{1}--\eqref{Initialcondition1}. 

\begin{thm}[Local existence] \label{local}
Let $u_0,v_0,w_1,w_2\in C_0(\mathbb{R}^N)$, $\sigma,\gamma>-1$, and $p,q>1$. Then the following holds:
\begin{itemize}
\item[{\rm{(i)}}] There exist $0< T <\infty$  and a unique  mild solution 
$$
(u,v)\in C([0,T],C_0(\mathbb{R}^N)\times C_0(\mathbb{R}^N))
$$ 
to  \eqref{1}--\eqref{Initialcondition1}. 
\item[{\rm{(ii)}}] The solution $(u,v)$ can be extended to a maximal interval  $[0, T_{\max})$, $0<T_{\max}\leq \infty$. Moreover, if $T_{\max}<\infty$, then 
$$
\lim_{t\to T_{\max}^-} \left(\|u\|_{L^\infty((0,t)\times\mathbb{R}^N)}+\|v\|_{L^\infty((0,t)\times\mathbb{R}^N)}\right)=\infty  \mbox{ (finite time blow-up)}.
$$
\item[{\rm{(iii)}}] If, in addition $u_0,v_0,w_1,w_2\in L^r(\mathbb{R}^N)$ for some $1\leq r< \infty$, then 
$$
u,v\in C([0,T_{\max}),C_0(\mathbb{R}^N)\cap L^r(\mathbb{R}^N)).
$$
\end{itemize}
\end{thm}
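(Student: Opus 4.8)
The plan is to prove the three parts by a standard fixed-point argument adapted to the inhomogeneous system, followed by a continuation argument and an $L^r$-propagation argument.

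\medskip

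\emph{Part (i): local existence and uniqueness.} First I would fix $M := \|u_0\|_{L^\infty} + \|v_0\|_{L^\infty} + 1$ (say), and look for a mild solution in the complete metric space
\[
X_T := \left\{ (u,v) \in C([0,T], C_0(\mathbb{R}^N)\times C_0(\mathbb{R}^N)) : \sup_{0\le t\le T}\left(\|u(t)\|_{L^\infty}+\|v(t)\|_{L^\infty}\right) \le 2M \right\},
\]
equipped with the sup-in-time, sup-in-space norm. On $X_T$ define the map $\Phi(u,v) = (\Phi_1(u,v), \Phi_2(u,v))$ by the right-hand sides of \eqref{MILD}. The key estimates are: $\|S(t)u_0\|_{L^\infty}\le \|u_0\|_{L^\infty}$ (contraction property of the heat semigroup on $L^\infty$, hence on $C_0$); $\left\|\int_0^t S(t-s)|v(s)|^p\,ds\right\|_{L^\infty} \le \int_0^t \||v(s)|^p\|_{L^\infty}\,ds \le t\,(2M)^p$; and $\left\|\int_0^t s^\sigma S(t-s)w_1\,ds\right\|_{L^\infty}\le \|w_1\|_{L^\infty}\int_0^t s^\sigma\,ds = \|w_1\|_{L^\infty}\frac{t^{\sigma+1}}{\sigma+1}$, which is finite precisely because $\sigma>-1$. (Here one also uses that $S(t)$ maps $C_0(\mathbb{R}^N)$ into itself and that $s\mapsto S(t-s)w_1$ is continuous, so the Bochner integrals make sense in $C_0$.) Summing up, for $T$ small enough $\Phi$ maps $X_T$ into itself; a similar computation using $\big||v_1|^p - |v_2|^p\big| \le p(|v_1|+|v_2|)^{p-1}|v_1-v_2| \le p(4M)^{p-1}\|v_1-v_2\|_{L^\infty}$ shows $\Phi$ is a contraction for $T$ further shrunk (note the forcing terms are identical for two solutions, so they cancel in the difference). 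Banach's fixed point theorem yields a unique fixed point, i.e.\ a unique mild solution on $[0,T]$; uniqueness in the full class $C([0,T],C_0\times C_0)$ follows by a standard Gronwall argument on any common interval.

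\medskip

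\emph{Part (ii): maximal interval and blow-up alternative.} Define $T_{\max}$ as the supremum of all $T>0$ for which a mild solution exists on $[0,T]$; by uniqueness these solutions agree on overlaps, giving a solution on $[0,T_{\max})$. Suppose $T_{\max}<\infty$ but $\limsup_{t\to T_{\max}^-}(\|u\|_{L^\infty((0,t)\times\mathbb{R}^N)} + \|v\|_{L^\infty((0,t)\times\mathbb{R}^N)}) =: K < \infty$. Then for $t_0 < T_{\max}$ close to $T_{\max}$, the local existence argument started from time $t_0$ with data $(u(t_0),v(t_0))$ produces a solution on $[t_0, t_0+\delta]$ where $\delta$ depends only on $K$ (and on $\|w_i\|_{L^\infty}$, $\sigma$, $\gamma$, and the fixed horizon $T_{\max}+1$, since the forcing contribution over $[t_0,t_0+\delta]$ is bounded by $\|w_1\|_{L^\infty}\frac{(T_{\max}+1)^{\sigma+1}}{\sigma+1}$ uniformly) — the point being that $\delta$ does not shrink to $0$ as $t_0\to T_{\max}$. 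Choosing $t_0$ with $t_0+\delta > T_{\max}$ contradicts maximality, so $K=\infty$, which is the claimed blow-up alternative (the $\limsup$ upgrades to a $\lim$ since $t\mapsto \|u\|_{L^\infty((0,t)\times\mathbb{R}^N)}$ is nondecreasing).

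\medskip

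\emph{Part (iii): $L^r$ propagation.} Assume additionally $u_0,v_0,w_1,w_2\in L^r$. Here I would use that $S(t)$ is also a contraction semigroup on $L^r(\mathbb{R}^N)$ for $1\le r<\infty$, that it commutes appropriately, and run the fixed-point argument in the smaller space $C([0,T], (C_0\cap L^r)\times(C_0\cap L^r))$ with norm $\sup_t(\|\cdot\|_{L^\infty}+\|\cdot\|_{L^r})$. The only new estimate needed is $\||v(s)|^p\|_{L^r} = \|v(s)\|_{L^{pr}}^p \le \|v(s)\|_{L^\infty}^{p-1}\,\|v(s)\|_{L^r}$ (interpolating $L^{pr}$ between $L^r$ and $L^\infty$), which is controlled since we already control $\|v(s)\|_{L^\infty}$; the linear and forcing terms are handled exactly as before with $L^r$ in place of $L^\infty$. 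This gives a local $L^r$-solution, which by the uniqueness from part (i) coincides with $(u,v)$; bootstrapping across $[0,T_{\max})$ using the fact that $\|u\|_{L^\infty((0,t)\times\mathbb{R}^N)}$ and $\|v\|_{L^\infty((0,t)\times\mathbb{R}^N)}$ stay finite on $[0,T_{\max})$ (so the interpolation constant stays bounded on compact subintervals), one extends the $L^r$-regularity to all of $[0,T_{\max})$.

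\medskip

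\emph{Main obstacle.} The essentially new feature compared with the classical homogeneous theory is the forcing terms $\int_0^t s^\sigma S(t-s)w_i\,ds$; the real content is checking that these are well-defined, $C_0$-valued (and $L^r$-valued in (iii)), continuous in $t$, and contribute a term vanishing as $T\to0$ — all of which hinge on the integrability of $s^\sigma$ near $s=0$, i.e.\ on the hypothesis $\sigma,\gamma>-1$. Everything else is a routine adaptation of the standard semigroup/contraction-mapping proof of local well-posedness for semilinear heat equations; I expect no serious difficulty there, only bookkeeping of the coupled structure.
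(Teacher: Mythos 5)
Your proposal is correct and follows essentially the same route as the paper: a Banach fixed-point argument in a ball of $C([0,T],C_0(\mathbb{R}^N)\times C_0(\mathbb{R}^N))$ based on the $L^\infty$-contractivity of the heat semigroup and the integrability of $s^\sigma$, $s^\gamma$ (i.e.\ $\sigma,\gamma>-1$), with uniqueness by Gronwall, a standard continuation argument for the blow-up alternative in (ii) (which the paper only sketches by referring to \cite{JKS}), and the same interpolation estimate $\||v|^p\|_{L^r}\leq \|v\|_{\infty}^{p-1}\|v\|_{L^r}$ for part (iii). No substantive differences to report.
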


Next, we study the finite time blow-up of mild solutions to \eqref{1}--\eqref{Initialcondition1}.

\begin{thm}[Blow-up] \label{Blow-up}
Let $\sigma,\gamma>-1$, $\sigma, \gamma\neq 0$. Suppose that $u_0,v_0\in C_0(\mathbb{R}^N)$ and $w_i\in C_0(\mathbb{R}^N)\cap L^1(\mathbb{R}^N)$ are such that  $\displaystyle\int_{\mathbb{R}^N}w_i(x)\,dx>0$,  $i=1,2$. Then the following holds:
\begin{itemize}
\item[{\rm{(i)}}] If $\sigma,\gamma\in(-1,0)$ and
\begin{equation}\label{C1}
\frac{N}{2}< \max\left\{\frac{(\sigma+1)(pq-1)+p+1}{pq-1},\frac{(\gamma+1)(pq-1)+q+1}{pq-1}\right\},
\end{equation}
then the mild solution $(u,v)$ to  \eqref{1}--\eqref{Initialcondition1} blows up in finite time.
\item[{\rm{(ii)}}] If $\sigma>0$ or $\gamma>0$, then for any $p,q>1$, the mild solution $(u,v)$ to  \eqref{1}--\eqref{Initialcondition1} blows up in finite time.
\end{itemize}
\end{thm}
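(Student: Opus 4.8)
The plan is to argue by contradiction through a rescaled test-function estimate applied to both equations at once. Suppose $(u,v)$ is a global mild solution, i.e.\ $T_{\max}=\infty$, so that $u,v\in C([0,\infty),C_0(\mathbb{R}^N))$. Fix nonnegative $\eta\in C^\infty([0,\infty))$ with $\operatorname{supp}\eta\subset[1,2]$ and $\eta\not\equiv0$, and nonnegative $\phi\in C_c^\infty(\mathbb{R}^N)$ with $\phi\equiv1$ on $\{|x|\le1\}$ and $\operatorname{supp}\phi\subset\{|x|\le2\}$; fix a large integer $\ell$ and parameters $R,T>0$, and set $\Psi(t,x)=\eta(t/T)^{\ell}\phi(x/R)^{\ell}$, which is supported in $\{|x|\le2R\}\times[T,2T]$. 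Pairing the Duhamel formula \eqref{MILD} for $u$ with $\Psi$ and using $\langle S(\tau)f,g\rangle=\langle f,S(\tau)g\rangle$, $\partial_\tau S(\tau)=\Delta S(\tau)$, Fubini's theorem, and $\Psi(0,\cdot)=0$, a routine computation (cf.\ \cite{JKS}) gives the weak identity
\begin{equation*}
\int_0^\infty\!\!\int_{\mathbb{R}^N}\big(|v|^{p}+t^{\sigma}w_1\big)\Psi\,dx\,dt=-\int_0^\infty\!\!\int_{\mathbb{R}^N}u\,\big(\Psi_t+\Delta\Psi\big)\,dx\,dt,
\end{equation*}
together with the analogous identity obtained by replacing $(u,v,p,\sigma,w_1)$ with $(v,u,q,\gamma,w_2)$. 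The essential point is that \emph{no initial-data term occurs}, which is what makes the argument go through for $u_0,v_0\in C_0(\mathbb{R}^N)$.

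Then I would estimate both sides. Put $F_1=\int_0^\infty\!\int_{\mathbb{R}^N}t^{\sigma}w_1\Psi$ and $F_2=\int_0^\infty\!\int_{\mathbb{R}^N}t^{\gamma}w_2\Psi$. Separating variables, $F_1=c_\sigma\,T^{\sigma+1}\int_{\mathbb{R}^N}w_1\phi(x/R)^{\ell}\,dx$ with $c_\sigma=\int_1^2 \tau^{\sigma}\eta(\tau)^{\ell}\,d\tau>0$; since $w_1\in L^1(\mathbb{R}^N)$ and $\int w_1>0$, dominated convergence yields $\int w_1\phi(x/R)^{\ell}dx\to\int w_1>0$, so $F_1\ge c\,T^{\sigma+1}$ and likewise $F_2\ge c\,T^{\gamma+1}$ as soon as $R\ge R_0$. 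For the right-hand side, $|\Psi_t+\Delta\Psi|\le C(T^{-1}+R^{-2})\Psi^{1-2/\ell}$ on $\operatorname{supp}\Psi$, so by Hölder's inequality (with $\ell$ so large that every power of $\Psi$ that appears is nonnegative, hence the attendant space--time integrals are $\le|\operatorname{supp}\Psi|\le CR^N T$),
\begin{equation*}
\Big|\int_0^\infty\!\!\int_{\mathbb{R}^N}u\,(\Psi_t+\Delta\Psi)\Big|\le C(T^{-1}+R^{-2})\Big(\int_0^\infty\!\!\int_{\mathbb{R}^N}|u|^{q}\Psi\Big)^{1/q}(R^N T)^{1/q'},
\end{equation*}
and similarly with $q$ replaced by $p$ for the other equation. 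Inserting these into the two weak identities and combining them (discarding the nonnegative terms $\int\!\int|v|^p\Psi$, $\int\!\int|u|^q\Psi$, $F_1$, $F_2$ where convenient) leads, after a computation in which the exponent of $R^N T$ collapses to $1$ thanks to $\tfrac1{p'}+\tfrac1{pq'}=\tfrac{pq-1}{pq}$, to
\begin{equation*}
F_1\le C(T^{-1}+R^{-2})^{\frac{p(q+1)}{pq-1}}R^N T,\qquad F_2\le C(T^{-1}+R^{-2})^{\frac{q(p+1)}{pq-1}}R^N T.
\end{equation*}

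For part (i) I would take $T=R^2$ and send $R\to\infty$. Then $F_1\ge c\,R^{2(\sigma+1)}$, whereas the first bound gives $F_1\le C\,R^{N+2-\frac{2p(q+1)}{pq-1}}=C\,R^{N-\frac{2(p+1)}{pq-1}}$; this is impossible as $R\to\infty$ whenever $2(\sigma+1)>N-\tfrac{2(p+1)}{pq-1}$, i.e.\ $\tfrac N2<(\sigma+1)+\tfrac{p+1}{pq-1}$. Using instead the bound for $F_2$ and $F_2\ge c\,R^{2(\gamma+1)}$ yields a contradiction whenever $\tfrac N2<(\gamma+1)+\tfrac{q+1}{pq-1}$. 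Hence \eqref{C1} rules out a global solution, and by Theorem~\ref{local}(ii) the solution blows up in finite time. For part (ii) I would instead keep $R=R_0$ fixed (with $R_0$ large enough that $\int w_i\phi(\cdot/R_0)^{\ell}>0$, $i=1,2$) and let $T\to\infty$: then $T^{-1}+R_0^{-2}\le C_{R_0}$ for $T\ge1$, so $F_1\le C_{R_0}T$ while $F_1\ge c\,T^{\sigma+1}$, which is impossible for large $T$ since $\sigma+1>1$; if $\gamma>0$ instead, one argues the same way with $F_2$. Thus blow-up holds for all $p,q>1$ and all $N$, again via Theorem~\ref{local}(ii).

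The step I expect to be most delicate is the rigorous derivation of the two weak identities from the mild formulation \eqref{MILD} --- justifying Fubini and the semigroup manipulations and checking that the vanishing of $\Psi$ near $t=0$ together with $u_0,v_0\in C_0$ really eliminates every boundary/initial contribution --- and, to a lesser extent, the bookkeeping in the elimination: confirming that $\ell$ may be chosen large enough for all powers of $\Psi$ to be controlled by $|\operatorname{supp}\Psi|$, and that the power of $R^N T$ reduces to exactly $1$, so that the sharp thresholds of \eqref{C1} appear rather than the weaker ones a cruder Young's inequality would give. The forcing lower bounds and the final scaling choices are then routine.
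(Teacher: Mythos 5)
Your proposal is correct and follows essentially the same route as the paper: the rescaled test-function (Mitidieri--Pohozaev) method applied to the weak formulation obtained from the Duhamel formula (the paper isolates this as Lemma \ref{MildWeak}), with H\"older and Young inequalities coupling the two equations, parabolic scaling $T=R^2$ with $R\to\infty$ for part (i), and $R$ fixed with $T\to\infty$ for part (ii), yielding exactly the thresholds in \eqref{C1}. The only differences are cosmetic (time cutoff supported in $[T,2T]$ rather than $(0,T)$, and a slightly cruder but equivalent bookkeeping of $|\Psi_t+\Delta\Psi|$), so no changes are needed.
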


For the proofs of the above blow-up results, we make use of  the test function method  (see e.g. \cite{FinoKirane,MP,Zhang}).

\begin{rem}
Note that no sign conditions are imposed on the initial values in Theorem \ref{Blow-up}.
\end{rem}

We state below the obtained global existence result.  Let us point out that the conditions ensuring the global existence depend on the norms of the initial conditions and forcing terms.

For $\sigma,\gamma\in(-1,0)$ and $p,q>1$, let 
\begin{equation}\label{di}
d_1=\frac{N(pq-1)}{2(p+1)},\quad d_2=\frac{N(pq-1)}{2(q+1)}
\end{equation}
and
\begin{equation}\label{ki}
k_1=\frac{N(pq-1)}{2[(pq-1)(1+\sigma)+p+1]},\quad k_2=\frac{N(pq-1)}{2[(pq-1)(1+\gamma)+q+1]}.
\end{equation}
\begin{thm}[Global existence]\label{global}
Let $\sigma,\gamma\in(-1,0)$,  
$$
u_0\in C_0(\mathbb{R}^N)\cap L^{d_1}(\mathbb{R}^N),\quad v_0\in C_0(\mathbb{R}^N)\cap L^{d_2}(\mathbb{R}^N),\quad w_i\in C_0(\mathbb{R}^N)\cap L^{k_i}(\mathbb{R}^N),\, i=1,2,
$$
and $(u,v)$ be the corresponding mild solution to \eqref{1}--\eqref{Initialcondition1}. If
\begin{equation}\label{C3}
\frac{N}{2}\geq \max\left\{\frac{(\sigma+1)(pq-1)+p+1}{pq-1},\frac{(\gamma+1)(pq-1)+q+1}{pq-1}\right\}
\end{equation}
and 
 \begin{equation}\label{CETR}
p>\max\left\{\frac{\sigma}{\gamma},1\right\},\quad q>\max\left\{\frac{\gamma}{\sigma},1\right\},
\end{equation}
then $(u,v)\in C([0,\infty),C_0(\mathbb{R}^N)\times C_0(\mathbb{R}^N))$ for $\|u_0\|_{L^{d_1}}+\|v_0\|_{L^{d_2}}+\|w_1\|_{L^{k_1}}+\|w_2\|_{L^{k_2}}$ sufficiently small.
\end{thm}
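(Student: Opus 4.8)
The plan is to run a standard fixed-point/contraction argument in a suitable scaling-invariant space, using the smallness of the data. First I would set up the Banach space adapted to the problem. Guided by the scaling of the system and by the exponents $d_1,d_2,k_1,k_2$ in \eqref{di}--\eqref{ki}, I would introduce weighted norms of the form
\begin{equation}\label{Xnorm}
\|u\|_{X_1}=\sup_{t>0}\ t^{\alpha_1}\|u(t)\|_{L^{\infty}}+\sup_{t>0}\ t^{\beta_1}\|u(t)\|_{L^{d_1}},\qquad
\|v\|_{X_2}=\sup_{t>0}\ t^{\alpha_2}\|v(t)\|_{L^{\infty}}+\sup_{t>0}\ t^{\beta_2}\|v(t)\|_{L^{d_2}},
\end{equation}
with the powers $\alpha_i,\beta_i$ chosen so that each of the three Duhamel contributions in \eqref{MILD} reproduces the same time weight. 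Concretely, $\beta_i=0$ (so the $L^{d_i}$ norm is bounded uniformly in time, matching the fact that $u_0\in L^{d_1}$, $v_0\in L^{d_2}$), and $\alpha_i$ is forced by the $L^{d_i}\to L^{\infty}$ smoothing of $S(t)$, i.e. $\alpha_1=\frac{N}{2d_1}=\frac{p+1}{pq-1}$ and $\alpha_2=\frac{N}{2d_2}=\frac{q+1}{pq-1}$. The space will be $X=\{(u,v)\in C((0,\infty),C_0\times C_0):\ \|u\|_{X_1}+\|v\|_{X_2}\le M\}$ for a small $M$ to be fixed, and I would work on the closed ball of radius $M$.

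Next I would estimate, one term at a time, the map $\Phi(u,v)=(\Phi_1(v),\Phi_2(u))$ given by the right-hand sides of \eqref{MILD}. For the linear parts $S(t)u_0$ and $S(t)v_0$, the $L^{p}$--$L^{q}$ estimates for the heat semigroup give exactly the time decay $t^{-\alpha_i}$ in $L^{\infty}$ and boundedness in $L^{d_i}$, producing contributions $\lesssim\|u_0\|_{L^{d_1}}$, $\|v_0\|_{L^{d_2}}$. For the forcing terms $\int_0^t s^{\sigma}S(t-s)w_1\,ds$ I would split the integral at $s=t/2$ and use the $L^{k_1}$--$L^{\infty}$ and $L^{k_1}$--$L^{d_1}$ smoothing of $S(t-s)$ together with $\sigma,\gamma\in(-1,0)$ to guarantee integrability of $s^{\sigma}$ near $0$; the exponents $k_1,k_2$ in \eqref{ki} are precisely those for which the resulting time powers match $t^{-\alpha_i}$ and $t^{0}$, and condition \eqref{C3} is exactly what makes the relevant $\beta$-function integrals converge (no divergence as the upper limit $\to\infty$). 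The nonlinear parts $\int_0^t S(t-s)|v(s)|^{p}\,ds$ I would bound by interpolating $|v(s)|^{p}$ between $L^{d_1/p}$ and $L^{\infty/p}$: since $\|v(s)\|_{L^{d_2}}\le M$ and $\|v(s)\|_{L^{\infty}}\le Ms^{-\alpha_2}$, one controls $\||v(s)|^{p}\|_{L^{m}}$ for an appropriate $m$, and the condition \eqref{CETR}, $p>\max\{\sigma/\gamma,1\}$, $q>\max\{\gamma/\sigma,1\}$, is what makes the time-exponent arithmetic close (it ensures the nonlinear time weight is more singular-integrable than $s^{-1}$ at neither endpoint and reproduces $t^{-\alpha_1}$). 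Collecting everything yields $\|\Phi(u,v)\|_X\le C_0(\|u_0\|_{L^{d_1}}+\|v_0\|_{L^{d_2}}+\|w_1\|_{L^{k_1}}+\|w_2\|_{L^{k_2}})+C_1(M^{p}+M^{q})$, and a Lipschitz estimate of the same shape, so for $M$ small and the data small relative to $M$, $\Phi$ is a contraction on the ball.

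Finally I would invoke the contraction mapping principle to obtain a fixed point $(u,v)\in X$, which by construction is a global mild solution with the stated decay; checking $(u,v)\in C([0,\infty),C_0\times C_0)$ follows from the continuity of the semigroup and dominated convergence, and uniqueness of this global solution within $X$, combined with the local uniqueness in Theorem \ref{local}, identifies it with the maximal solution, so $T_{\max}=\infty$. The main obstacle I anticipate is purely bookkeeping: verifying that the four exponents $d_i,k_i$ and the auxiliary powers $\alpha_i$ can be chosen \emph{simultaneously} so that all five Duhamel pieces (two linear, two forcing, two nonlinear — the nonlinear ones being the delicate cross-terms) carry the same time weight, and that the Beta-integrals in $s$ all converge; this is exactly where hypotheses \eqref{C3} and \eqref{CETR} enter, and getting the inequalities to line up (rather than merely the equalities of exponents) is the crux. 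A secondary technical point is handling the mismatch between the natural solution space $C_0$ and the $L^{d_i}$, $L^{k_i}$ integrability; Theorem \ref{local}(iii) lets me propagate $L^{r}$-regularity, so the scaling space $X$ is consistent with the local theory.
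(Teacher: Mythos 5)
Your overall strategy coincides with the paper's in outline (a weighted-norm contraction yielding a global mild solution, then identification with the local solution via Theorem \ref{local}(iii) and an upgrade to $C_0$-valued continuity), but the specific choice of norms you propose does not close, and the failure is exactly at the point you dismiss as ``purely bookkeeping''. In your space the second component of the norm is $\sup_{t>0}t^{\alpha_1}\|\cdot\|_{L^\infty}$ with $\alpha_1=\frac{N}{2d_1}=\frac{p+1}{pq-1}$, and there you must estimate the forcing term $\int_0^t s^\sigma S(t-s)w_1\,ds$ using only $\|w_1\|_{L^{k_1}}$, since that is the only norm of $w_1$ assumed small. The $L^{k_1}\to L^\infty$ smoothing produces the kernel $(t-s)^{-N/(2k_1)}$ with $\frac{N}{2k_1}=(1+\sigma)+\frac{p+1}{pq-1}$, which is $\geq 1$ for a large set of admissible parameters (e.g. $p=q=2$, $\sigma=\gamma=-\frac12$, $N\geq 3$ satisfies \eqref{C3}--\eqref{CETR} and gives $\frac{N}{2k_1}=\frac32$), so the integral diverges near $s=t$. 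Splitting at $s=t/2$ does not rescue it: on $(t/2,t)$ you either keep the non-integrable singularity or must pass to $\|w_1\|_{L^r}$ with $r>k_1$ (or to $\|w_1\|_\infty$), quantities that are finite because $w_1\in C_0(\mathbb{R}^N)$ but are not controlled by the small quantity in the statement, so the constant (data) part of your fixed-point map cannot be made small and the contraction on a small ball fails. (By contrast, your treatment of the linear and nonlinear Duhamel terms in this norm, via interpolation between $L^{d_2}$ and $L^\infty$, does work.)

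Avoiding precisely these endpoint failures is the actual content of the paper's proof: it works in $L^{q_1}\times L^{q_2}$ with auxiliary exponents $d_i<q_i<\infty$, $q_i>k_i$, and the existence of $q_1,q_2$ for which \emph{all} the smoothing exponents $\beta_3,\dots,\beta_6$ are strictly below $1$ and $\beta_1q<1$, $\beta_2p<1$ hold simultaneously is established in Lemma \ref{lemmaouff}, resting on Lemmas \ref{lemmaouf1}--\ref{lemmaouf2}; this is also where hypothesis \eqref{CETR} enters, whereas in your sketch \eqref{CETR} is invoked only vaguely and never actually used in the exponent arithmetic --- a further sign the bookkeeping has not been carried through. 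To repair your argument you would have to replace the $t^{\alpha_1}L^\infty$ component by a weighted $L^{q_1}$ norm with $q_1$ finite and subcritical in the above sense (and similarly for $v$), which is essentially the paper's construction; the remaining steps you outline (identification with the local solution on a short interval using uniqueness and Theorem \ref{local}(iii), then continuity up to $t=\infty$) then need the paper's bootstrap in $L^{r_1}\times L^{r_2}$ to reach $C_0\times C_0$, since the fixed-point space no longer embeds in $L^\infty$.
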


\begin{rem}
Under condition \eqref{C3}, it is still an open question if we have blow-up  or global existence for one of the following cases:
\begin{itemize}
\item[{\rm{(a)}}] $-1<\sigma<\gamma<0$ and $p\leq\frac{\sigma}{\gamma}$.
\item[{\rm{(b)}}] $-1<\gamma<\sigma<0$  and $q\leq\frac{\gamma}{\sigma}$.\end{itemize}
\end{rem}

Consider now the special case of \eqref{1} when $\sigma=\gamma>-1$, $\sigma\neq 0$. Namely,
\begin{eqnarray}\label{1spc}
\left\{
  \begin{array}{lll}
u_t-\Delta u &=&|v|^{p}+t^\sigma w_1(x),\quad x\in\mathbb{R}^N,t>0,\\
v_t-\Delta v &=&|u|^{q}+t^\sigma w_2(x),\quad x\in\mathbb{R}^N,t>0.
  \end{array}
\right.
\end{eqnarray}
Let 
$$
N^*(\sigma,p,q):=2\left(\sigma+1+ \frac{\alpha+1}{pq-1}\right),\quad \alpha=\max\{p,q\}.
$$
From Theorems \ref{Blow-up} and  \ref{global}, one deduces that $N^*(\sigma,p,q)$ is critical for problem  \eqref{1spc}--\eqref{Initialcondition1} in the following sense.

\begin{cor}
Let $\sigma=\gamma\in (-1,0)$ and $p,q>1$. Then the following holds:
\begin{itemize}
\item[{\rm{(i)}}] If $u_0,v_0\in C_0(\mathbb{R}^N)$, $w_i\in C_0(\mathbb{R}^N)\cap L^1(\mathbb{R}^N)$,  $\displaystyle\int_{\mathbb{R}^N}w_i(x)\,dx>0$, $i=1,2$, and
$$
N<N^*(\sigma,p,q),
$$
then the mild solution $(u,v)$ to  \eqref{1spc}--\eqref{Initialcondition1} blows up in finite time.
\item[{\rm{(ii)}}] If $u_0\in C_0(\mathbb{R}^N)\cap L^{d_1}(\mathbb{R}^N)$, $v_0\in C_0(\mathbb{R}^N)\cap L^{d_2}(\mathbb{R}^N)$,  $w_i\in C_0(\mathbb{R}^N)\cap L^{k_i}(\mathbb{R}^N)$, $i=1,2$, and 
$$
N\geq N^*(\sigma,p,q),
$$
then $(u,v)$ exists globally for  $\|u_0\|_{L^{d_1}}+\|v_0\|_{L^{d_2}}+\|w_1\|_{L^{k_1}}+\|w_2\|_{L^{k_2}}$  sufficiently small.
\end{itemize}
\end{cor}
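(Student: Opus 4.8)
The plan is to obtain both parts of the corollary directly from Theorems \ref{Blow-up} and \ref{global}, after specializing to $\gamma=\sigma$ and simplifying the threshold conditions. The observation driving everything is that when $\gamma=\sigma$ the two competing fractions appearing in \eqref{C1} and \eqref{C3} differ only through the numerators $p+1$ and $q+1$, so the maximum collapses onto $\max\{p,q\}=\alpha$ and reproduces exactly half of $N^*(\sigma,p,q)$.

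Concretely, I would first record the elementary identity
\begin{equation*}
\max\left\{\frac{(\sigma+1)(pq-1)+p+1}{pq-1},\frac{(\gamma+1)(pq-1)+q+1}{pq-1}\right\}
=(\sigma+1)+\frac{\max\{p,q\}+1}{pq-1}=\frac{N^*(\sigma,p,q)}{2},
\end{equation*}
valid whenever $\gamma=\sigma$; it follows by rewriting each quotient as $(\sigma+1)+\frac{p+1}{pq-1}$, respectively $(\sigma+1)+\frac{q+1}{pq-1}$, and factoring the common summand $(\sigma+1)$ out of the maximum. Consequently, condition \eqref{C1} is equivalent to $N<N^*(\sigma,p,q)$ and condition \eqref{C3} is equivalent to $N\geq N^*(\sigma,p,q)$.

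For part (i): the listed hypotheses, namely $\sigma=\gamma\in(-1,0)$, $u_0,v_0\in C_0(\mathbb{R}^N)$, $w_i\in C_0(\mathbb{R}^N)\cap L^1(\mathbb{R}^N)$ with $\int_{\mathbb{R}^N}w_i\,dx>0$, and $N<N^*(\sigma,p,q)$, are by the identity above precisely those of Theorem \ref{Blow-up}(i), so the mild solution $(u,v)$ blows up in finite time. For part (ii): the hypotheses furnish $\sigma=\gamma\in(-1,0)$, the integrability conditions $u_0\in L^{d_1}$, $v_0\in L^{d_2}$, $w_i\in L^{k_i}$, and (again via the identity) condition \eqref{C3}; it then remains only to check \eqref{CETR}, and here the essential point is that with $\gamma=\sigma$ one has $\sigma/\gamma=\gamma/\sigma=1$, so \eqref{CETR} is nothing but the standing assumption $p,q>1$. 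Theorem \ref{global} then yields $(u,v)\in C([0,\infty),C_0(\mathbb{R}^N)\times C_0(\mathbb{R}^N))$ as soon as $\|u_0\|_{L^{d_1}}+\|v_0\|_{L^{d_2}}+\|w_1\|_{L^{k_1}}+\|w_2\|_{L^{k_2}}$ is sufficiently small.

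There is no genuine obstacle in this argument; the only step worth isolating is the verification that the symmetry $\gamma=\sigma$ renders the auxiliary restriction \eqref{CETR} automatic. This is exactly why no gap survives between the blow-up and global-existence regimes, and why the two cases left open in the preceding remark (each requiring a strict ordering $\sigma<\gamma$ or $\gamma<\sigma$) cannot arise, so that the single number $N^*(\sigma,p,q)$ is sharply critical for \eqref{1spc}--\eqref{Initialcondition1}.
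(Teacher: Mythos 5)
Your proposal is correct and matches the paper's (implicit) argument: the corollary is stated there as a direct deduction from Theorems \ref{Blow-up} and \ref{global}, and your computation that for $\gamma=\sigma$ the maximum in \eqref{C1}--\eqref{C3} equals $\frac{N^*(\sigma,p,q)}{2}$ while \eqref{CETR} reduces to $p,q>1$ is exactly the verification needed.
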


The rest of the paper is organized as follows. In Section \ref{sec2}, we prove the local existence result given by Theorem \ref{local}. The blow-up results stated by Theorem \ref{Blow-up} are proved in Section \ref{sec3}.  The study of the global existence is investigated in Section \ref{sec4}, where we prove Theorem \ref{global}.

\section{Local existence}\label{sec2}
This section is devoted to the proof of Theorem \ref{local}. We first recall some fundamental properties related to $S(t)$, the heat semigroup on $\mathbb{R}^N$ (see e.g. \cite{CH,Evans,Giga}), and fix some notations.  

There exists a constant $C>0$ such that for any $1\leq \zeta \leq \varrho\leq \infty$,   one has
\begin{equation}\label{7}
\|S(t)\vartheta\|_{L^\varrho}\leq C t^{-\frac{N}{2}\left(\frac{1}{\zeta}-\frac{1}{\varrho}\right)} \|\vartheta\|_{L^\zeta}, \quad \vartheta\in L^\zeta(\mathbb{R}^N).
\end{equation}
In particular, one has
\begin{equation}\label{7'}
\|S(t)\vartheta\|_{L^\zeta }\leq  \|\vartheta\|_{L^\zeta}, \quad \vartheta\in L^\zeta(\mathbb{R}^N).
\end{equation}
Furthermore, for all $\vartheta \in C_0(\mathbb{R}^N)$,  it holds that 
$$
\lim_{t\rightarrow 0^+}S(t)\vartheta(x)=\vartheta(x),\quad x\in\mathbb{R}^N.
$$
We denote by $\|\cdot\|_\infty$ the $L^\infty$--norm in $\mathbb{R}^N$. 
Let 
$$
\delta_\infty(f,g)=\max\{\|f\|_\infty,\|g\|_\infty\},\quad f,g\in C_0(\mathbb{R}^N)
$$
and
$$
\delta_r(f,g)=\max\{\|f\|_{L^r},\|g\|_{L^r}\},\quad f,g\in L^r(\mathbb{R}^N),\quad 1\leq r<\infty.
$$

Given $0<T<\infty$, we denote by $\|\cdot\|_{T,\infty}$ the  $L^\infty$--norm in $(0,T)\times \mathbb{R}^N$.

\begin{proof}[Proof of Theorem \ref{local}]
(i)  We first prove uniqueness in the functional space 
$$
C([0,T],C_0(\mathbb{R}^N)\times C_0(\mathbb{R}^N))
$$
for an arbitrary $0<T<\infty$.  Let $(u,v),(\tilde{u},\tilde{v})\in C([0,T],C_0(\mathbb{R}^N)\times C_0(\mathbb{R}^N))$ be two mild solutions
to \eqref{1}--\eqref{Initialcondition1}. Using \eqref{MILD}, property \eqref{7} and the inequality
\begin{equation}\label{usin}
|a^r-b^r|\leq  r \max\{a^{r-1}, b^{r-1}\}|a-b|,\quad r>1,\,\, a, b\geq  0,
\end{equation}
for all $0\leq t\leq T$, one obtains 
$$
\|u(t)-\tilde{u}(t)\|_{\infty}+\|v(t)-\tilde{v}(t)\|_{\infty}
\leq C \int_0^t\left(\|u(s)-\tilde{u}(s)\|_{\infty}+\|v(s)-\tilde{v}(s)\|_{\infty}\right)\,ds.
$$
Next, by Gronwall's inequality, the uniqueness follows.

For arbitrary $0<T<\infty$, we introduce the Banach space $(E_T, \vertiii{\cdot})$ defined by 
$$
E_T=\left\{(u,v)\in C([0,T],C_0(\mathbb{R}^N)\times
C_0(\mathbb{R}^N)):\, \vertiii{(u,v)}\leq
2\left(\delta_\infty(u_0,w_1)+\delta_\infty(v_0,w_2)\right):=2M\right\},
$$
where 
$$
\vertiii{(u,v)}=\|u\|_{T,\infty}+\|v\|_{T,\infty},\quad (u,v)\in E_T.
$$
For every $U=(u,v)\in E_T$, let  $\Psi(U)=\left(\Psi_1(U),\Psi_2(U)\right)$, where
$$
\Psi_1(U)=S(t)u_0+\int_0^tS(t-s)|v(s)|^{p}\,ds+\int_0^ts^\sigma S(t-s)w_1\,ds,\quad t\in[0,T]
$$
and
$$
\Psi_2(U)=S(t)v_0+\int_0^tS(t-s)|u(s)|^{q}\,ds+\int_0^ts^\gamma S(t-s)w_2\,ds,\quad t\in[0,T].
$$
Since $u_0,v_0,w_1,w_2\in C_0(\mathbb{R}^N)$, $\sigma,\gamma>-1$, one can check easily that 
$$
\Psi (E_T)\subset C([0,T],C_0(\mathbb{R}^N)\times C_0(\mathbb{R}^N)).
$$
On the other hand, using \eqref{7'}, for all $U=(u,v)\in E_T$, for all $0<t<T$, one has
\begin{eqnarray*}
  \|\Psi_1(U)\|_\infty &\leq & \|u_0\|_\infty+
  \int_0^t \left\|S(t-s)|v|^{p}\right\|_\infty \,ds+\int_0^t s^\sigma \|w_1\|_\infty \,ds \\
  &\leq & \|u_0\|_\infty+T \|v\|_{T,\infty}^p +\frac{T^{\sigma+1}}{\sigma+1}\|w_1\|_\infty\\
&\leq & \|u_0\|_\infty+T 2^pM^p+\frac{T^{\sigma+1}}{\sigma+1}M\\
     &=& \|u_0\|_\infty+\left(2^pTM^{p-1}+\frac{T^{\sigma+1}}{\sigma+1}\right)M,
  \end{eqnarray*}
which yields
\begin{equation}\label{Fino1}
\|\Psi_1(U)\|_{T,\infty}\leq  \|u_0\|_\infty+\left(2^pTM^{p-1}+\frac{T^{\sigma+1}}{\sigma+1}\right)M.
\end{equation}
Similarly, one obtains
\begin{equation}\label{Fino2}
\|\Psi_2(U)\|_{T,\infty}\leq \|v_0\|_\infty+\left(2^qTM^{q-1}+\frac{T^{\gamma+1}}{\gamma+1}\right)M.
\end{equation}
Combining \eqref{Fino1} with \eqref{Fino2}, one deduces that 
$$
\vertiii{\Psi(U)}\leq M+ 2\max\left\{2^pTM^{p-1}+\frac{T^{\sigma+1}}{\sigma+1}, 2^qTM^{q-1}+\frac{T^{\gamma+1}}{\gamma+1}\right\}M.
$$
Hence, by choosing $0<T\ll1$ small enough so that 
\begin{equation}\label{choix1}
\max\left\{2^pTM^{p-1}+\frac{T^{\sigma+1}}{\sigma+1},2^qTM^{q-1}+\frac{T^{\gamma+1}}{\gamma+1}\right\}\leq \frac{1}{2},
\end{equation}
one obtains $\vertiii{\Psi(U)}\leq 2M$, i.e. 
$$
\Psi(E_T)\subset E_T.
$$
Next, we shall prove that $\Psi: E_T\to E_T$ is a contraction mapping, and by Banach contraction principle, the existence follows.

For $U=(u,v),V=(\tilde{u},\tilde{v})\in E_T$, again using \eqref{7'} and \eqref{usin}, for all $0<t<T$, one has
\begin{eqnarray*}
\|\Psi_1(U)-\Psi_1(V)\|_\infty &\leq & \int_0^t \left\|S(t-s) \left(|v(s)|^p-|\tilde{v}(s)|^p\right)\right\|_\infty\,ds\\
&\leq &  \int_0^t \left\||v(s)|^p-|\tilde{v}(s)|^p\right\|_\infty\,ds\\
&\leq & T 2^{p-1}p M^{p-1} \|v-\tilde{v}\|_{T,\infty},
\end{eqnarray*}
which yields
\begin{equation}\label{AFino1}
\|\Psi_1(U)-\Psi_1(V)\|_{T,\infty}\leq 2^{p-1}p M^{p-1}  T \|v-\tilde{v}\|_{T,\infty}.
\end{equation}
Similarly, one has
\begin{equation}\label{AFino2}
\|\Psi_2(U)-\Psi_2(V)\|_{T,\infty}\leq 2^{q-1}q M^{q-1}  T \|u-\tilde{u}\|_{T,\infty}.
\end{equation}
Combining \eqref{AFino1} with \eqref{AFino2}, it holds that
$$
\vertiii{\Psi(U)-\Psi(V)}\leq  2 \max\left\{2^{p-1}p M^{p-1},2^{q-1}q M^{q-1}\right\} T \vertiii{U-V}.
$$
Therefore, taking $0<T<\infty$ so that \eqref{choix1} is satisfied and 
$$
2 \max\left\{2^{p-1}p M^{p-1},2^{q-1}q M^{q-1}\right\} T <1,
$$
one obtains that $\Psi: E_T\to E_T$ is a contraction mapping.\\
(ii) Using the uniqueness of solutions, we conclude the existence of a maximal interval $[0,T_{\max})$,  where
$$
T_{\max}=\sup\left\{\tau>0:\, (u,v) \mbox{ is a mild solution
to } \eqref{1}-\eqref{Initialcondition1} \mbox{ in } C([0,\tau],C_0(\mathbb{R}^N)\times C_0(\mathbb{R}^N))\right\}.
$$
Furthermore, if $T_{\max}<\infty$, applying similar  arguments as in \cite{JKS}, one  concludes that  
$$
\|u\|_{L^\infty((0,t)\times\mathbb{R}^N)}+\|v\|_{L^\infty((0,t)\times\mathbb{R}^N)}\longrightarrow\infty\quad\mbox{as}\quad  t\rightarrow T_{\max}^-.
$$
(iii) If $u_0,v_0,w_1,w_2\in L^r(\mathbb{R}^N)\cap C_0(\mathbb{R}^N)$ for some $1\leq r<\infty$, repeating the fixed point argument in the functional space
\begin{eqnarray*}
E_{T,r}&=& \{(u,v)\in L^\infty((0,T),(C_0(\mathbb{R}^N)\cap L^r(\mathbb{R}^N))\times( C_0(\mathbb{R}^N)\cap L^r(\mathbb{R}^N))): \\
   &&\vertiii{(u,v)}\leq 2\left(\delta_\infty(u_0,w_1)+\delta_\infty(v_0,w_2)\right),\vertiii{(u,v)}_{r}\leq
  2\left(\delta_\infty(u_0,w_1)+\delta_r(v_0,w_2)\right)\},
\end{eqnarray*}
instead of $E_T$, where
$$
\vertiii{(u,v)}_{r}=\|u\|_{L^\infty((0,T),L^r(\mathbb{R}^N))}+\|v\|_{L^\infty((0,T),L^r(\mathbb{R}^N))},
$$
and  estimating $\|u^p\|_{L^r}$ by
$\|u\|^{p-1}_{\infty}\|u\|_{L^r}$ (the same for $v$) in
the contraction mapping argument, one obtains a unique solution $(u,v)$ in $E_{T,r}$, and we see that 
$$
u,v\in C([0,T_{\max}),C_0(\mathbb{R}^N))\cap C([0,T_{\max}),L^r(\mathbb{R}^N)).
$$
This ends the proof of Theorem \ref{local}.
\end{proof}

\section{Blow-up results}\label{sec3}

In this section, we prove the blow-up results given by Theorem \ref{Blow-up}. First, we give the
 \begin{defn}[Weak solution]\label{definitionweaksolution} 
Let $T>0$, $\sigma,\gamma>-1$, $p,q>1$ and $w_i \in L_{Loc}^1(\mathbb{R}^N)$, $i=1,2$. We say that
$$
(u,v)  \in L^q((0,T),L_{Loc}^q(\mathbb{R}^N))\times  L^p((0,T),L_{Loc}^p(\mathbb{R}^N))
$$
is a weak solution to \eqref{1}--\eqref{Initialcondition1}, if
\begin{equation}\label{weaksolution1}
\begin{aligned}
&\int_0^T\int_{\mathbb{R}^N}|v|^p(t,x)\varphi(t,x)\,dx\,dt+\int_0^T\int_{\mathbb{R}^N}t^\sigma w_1(x)\varphi(t,x)\,dx\,dt\\
&=-\int_0^T\int_{\mathbb{R}^N}u(t,x)\Delta\varphi(t,x)\,dx\,dt-\int_0^T\int_{\mathbb{R}^N}u(t,x)\varphi_t(t,x)\,dx\,dt
\end{aligned}
\end{equation}
and
\begin{equation}\label{weaksolution2}
\begin{aligned}
&\int_0^T\int_{\mathbb{R}^N}|u|^q(t,x)\varphi(t,x)\,dx\,dt+\int_0^T\int_{\mathbb{R}^N}t^\gamma w_2(x)\varphi(t,x)\,dx\,dt\\
&=-\int_0^T\int_{\mathbb{R}^N}v(t,x)\Delta\varphi(t,x)\,dx\,dt
-\int_0^T\int_{\mathbb{R}^N}v(t,x)\varphi_t(t,x)\,dx\,dt,
\end{aligned}
\end{equation}
for all compactly supported test function $\varphi\in C^1([0,T],C^2(\mathbb{R}^N))$ with $\mbox{supp}(\varphi)\subset (0,T)\times \mathbb{R}^N$.
\end{defn}

The following Lemma is crucial in the proof of  Theorem \ref{Blow-up}.

\begin{lem}[Mild$\implies$Weak]\label{MildWeak}
Let $T>0$, $\sigma,\gamma>-1$, $p,q>1$ and $u_0,v_0,w_i\in C_0(\mathbb{R}^N)$, $i=1,2$. If $(u,v)\in C([0,T],C_0(\mathbb{R}^N)\times C_0(\mathbb{R}^N))$ is a mild solution to  \eqref{1}--\eqref{Initialcondition1}, then $(u,v)  \in L^q((0,T),L_{Loc}^q(\mathbb{R}^N))\times  L^p((0,T),L_{Loc}^p(\mathbb{R}^N))$ is a weak solution to  \eqref{1}--\eqref{Initialcondition1}.
\end{lem}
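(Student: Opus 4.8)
The plan is to show that a mild solution, as defined by the Duhamel formula \eqref{MILD}, satisfies the weak formulation \eqref{weaksolution1}--\eqref{weaksolution2}. First I would check the integrability claim: since $(u,v)\in C([0,T],C_0(\mathbb{R}^N)\times C_0(\mathbb{R}^N))$, both $u$ and $v$ are bounded on $[0,T]\times\mathbb{R}^N$, so trivially $(u,v)\in L^q((0,T),L_{Loc}^q(\mathbb{R}^N))\times L^p((0,T),L_{Loc}^p(\mathbb{R}^N))$, since on any compact $K\subset\mathbb{R}^N$ the relevant integrals are dominated by $\|u\|_{T,\infty}^q\,T\,|K|$ and similarly for $v$. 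Then fix a test function $\varphi\in C^1([0,T],C^2(\mathbb{R}^N))$ with $\mathrm{supp}(\varphi)\subset(0,T)\times\mathbb{R}^N$.

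The core of the argument is to substitute the Duhamel representation of $u$ into $-\int_0^T\int_{\mathbb{R}^N}u(\Delta\varphi+\varphi_t)\,dx\,dt$ and compute. The key tools are: (a) Fubini's theorem, justified by the boundedness of $u,v$ and the compact support in $t$ and the rapid decay of the heat kernel, to interchange the $s$-integration in the Duhamel formula with the $(t,x)$-integration against $\varphi$; (b) the self-adjointness and smoothing of the heat semigroup, namely $\int_{\mathbb{R}^N}(S(t-s)f)(x)\,g(x)\,dx=\int_{\mathbb{R}^N}f(x)\,(S(t-s)g)(x)\,dx$, to move $S(t-s)$ off $|v(s)|^p$, $w_1$, $u_0$ and onto $\varphi$; and (c) the fact that $\psi(t):=S(t-s)\varphi(t)$ (for fixed $s$, or the relevant convolution object) satisfies the backward heat identity, so that $\partial_t[S(t-s)\varphi(t)]=S(t-s)(\Delta\varphi(t)+\varphi_t(t))$, allowing an integration by parts in $t$ that collapses the double time integral. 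Because $\varphi$ vanishes near $t=0$ and $t=T$, all boundary terms in the $t$-integration by parts disappear, and in particular the terms $S(t)u_0$ contribute nothing — consistent with the weak formulation carrying no initial-data term. After these manipulations the terms involving $|v(s)|^p$ reorganize into $\int_0^T\int_{\mathbb{R}^N}|v|^p\varphi$, the terms with $s^\sigma S(t-s)w_1$ reorganize into $\int_0^T\int_{\mathbb{R}^N}t^\sigma w_1\varphi$, and \eqref{weaksolution1} follows; \eqref{weaksolution2} is obtained identically with the roles of $u,v,p,\sigma,w_1$ replaced by $v,u,q,\gamma,w_2$.

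More precisely, for the second step I would introduce, for each fixed $s\in(0,T)$, the function $\Phi_s(t,x)=(S(t-s)\varphi(t,\cdot))(x)$ on $t\in[s,T]$; one checks $\partial_t\Phi_s=\Delta\Phi_s+(S(t-s)\varphi_t)(\cdot)$ and that $\Phi_s$ inherits from $\varphi$ the property $\Phi_s(T,\cdot)\equiv 0$ (support of $\varphi$ away from $T$) while $\Phi_s(s,\cdot)=\varphi(s,\cdot)$. Plugging the Duhamel formula into $-\int u\,\varphi_t - \int u\,\Delta\varphi$, applying Fubini and self-adjointness, the $u_0$-piece becomes $-\int_0^T\frac{d}{dt}\big[\int_{\mathbb{R}^N}u_0\,\Phi_0(t)\,dx\big]\,dt=0$; the nonlinear piece becomes $-\int_0^T\int_s^T\frac{d}{dt}\big[\int_{\mathbb{R}^N}|v(s)|^p\,\Phi_s(t)\,dx\big]\,dt\,ds=\int_0^T\int_{\mathbb{R}^N}|v(s,x)|^p\varphi(s,x)\,dx\,ds$; and the forcing piece similarly becomes $\int_0^T s^\sigma\int_{\mathbb{R}^N}w_1(x)\varphi(s,x)\,dx\,ds$, which is \eqref{weaksolution1} after renaming $s\to t$.

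The main obstacle is the careful justification of the differentiation-under-the-integral and integration-by-parts steps: one must verify that $t\mapsto\int_{\mathbb{R}^N}f(x)\,(S(t-s)\varphi(t,\cdot))(x)\,dx$ is genuinely $C^1$ on $[s,T]$ with the claimed derivative, which requires the regularity $\varphi\in C^1_tC^2_x$ together with the standard parabolic regularizing estimates \eqref{7}, and that all the iterated integrals are absolutely convergent so that Fubini applies — here the compact support of $\varphi$ in $t$, the boundedness of $u,v$, and the $L^1$-in-$x$ control coming from $\varphi$ having compact $x$-support (or from decay of the heat kernel together with $\varphi$ decaying) do the job. Since the excerpt already invokes ``applying similar arguments as in \cite{JKS}'' for the blow-up criterion, it is reasonable to carry out the computation in detail only for \eqref{weaksolution1} and state that \eqref{weaksolution2} follows by the obvious symmetry.
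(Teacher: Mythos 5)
Your argument is correct, but it runs in the opposite direction from the paper's. The paper multiplies the Duhamel formula \eqref{MILD} by $\varphi$, integrates in $x$, and differentiates the resulting function of $t$: it uses the generator identity $\frac{d}{dt}S(t)u_0=\Delta S(t)u_0$ and the Leibniz-type rule for $\frac{d}{dt}\int_0^t S(t-s)f(s)\,ds$ (citing Evans) to produce $\frac{d}{dt}\int u\varphi\,dx=\int u\Delta\varphi\,dx+\int u\varphi_t\,dx+\int|v|^p\varphi\,dx+\int t^\sigma w_1\varphi\,dx$, after moving $\Delta$ onto $\varphi$ by Green's identity and re-identifying $u$ from \eqref{MILD}; a final integration over $[0,T]$ with $\mathrm{supp}(\varphi)\subset(0,T)\times\mathbb{R}^N$ gives \eqref{weaksolution1}--\eqref{weaksolution2}. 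You instead substitute \eqref{MILD} directly into $-\int\!\!\int u(\Delta\varphi+\varphi_t)$, use Fubini and self-adjointness to transfer the semigroup onto $\varphi$, and collapse the double time integral via the identity $\partial_t[S(t-s)\varphi(t)]=S(t-s)(\Delta\varphi(t)+\varphi_t(t))$ and the fundamental theorem of calculus, with $\Phi_s(s)=\varphi(s)$ producing the $|v|^p\varphi$ and $t^\sigma w_1\varphi$ terms and $\varphi(0)=\varphi(T)=0$ killing the boundary contributions. Your route has the advantage that all time-differentiation falls on the smooth, compactly supported test function (so no regularity of the Duhamel integral of the merely continuous function $s\mapsto|v(s)|^p$ is ever needed, only Fubini and differentiation under the integral sign, both easy here); the paper's route is more direct and avoids the interchange of the $s$- and $(t,x)$-integrations, at the price of invoking semigroup differentiability results for the solution-side terms. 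Both yield the same computation in substance, and your reduction of \eqref{weaksolution2} to symmetry is exactly what the paper does implicitly.
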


\begin{proof}
The proof of this Lemma is similar to that of  \cite[Lemma 4.2]{FinoKirane}. For the completeness of this paper, we will do it in details.

Let $(u,v)\in C([0,T],C_0(\mathbb{R}^N)\times C_0(\mathbb{R}^N))$ be a mild 
solution to  \eqref{1}--\eqref{Initialcondition1}. Given a compactly supported test function $\varphi\in C^1([0,T],C^2(\mathbb{R}^N))$ with $\mbox{supp}(\varphi)\subset (0,T)\times \mathbb{R}^N$,
 multiplying \eqref{MILD} by $\varphi$ and integrating over $\mathbb{R}^N$, for all $0\leq t\leq T$, one obtains
\begin{eqnarray*}
\int_{\mathbb{R}^N}u(t,x)\varphi(t,x)\,dx &=& \int_{\mathbb{R}^N}S(t)u_0(x)\varphi(t,x)\,dx+ \int_{\mathbb{R}^N}\int_0^tS(t-s)|v(s)|^{p}\,ds\varphi(t,x)\,dx\\
&&+ \int_{\mathbb{R}^N}\int_0^ts^\sigma S(t-s)w_1(x)\,ds\varphi(t,x)\,dx
 \end{eqnarray*}
and
\begin{eqnarray*}
\int_{\mathbb{R}^N}v(t,x)\varphi(t,x)\,dx&=&\int_{\mathbb{R}^N}S(t)v_0(x)\varphi(t,x)\,dx+\int_{\mathbb{R}^N}\int_0^tS(t-s)|u(s)|^{q}\,ds\varphi(t,x)\,dx\\
&&+\int_{\mathbb{R}^N}\int_0^ts^\gamma S(t-s)w_2(x)\,ds\varphi(t,x)\,dx.
\end{eqnarray*}
Differentiating with respect to $t$, it holds that 
\begin{equation}\label{53}
\begin{aligned}
&\frac{d}{dt} \int_{\mathbb{R}^N}u(t,x)\varphi(t,x)\,dx\\
&= \int_{\mathbb{R}^N} \frac{d}{dt} \left(S(t)u_0(x)\varphi(t,x)\right)\,dx+ \int_{\mathbb{R}^N} \frac{d}{dt} \left(\int_0^tS(t-s)|v(s)|^{p}\,ds\varphi(t,x)\right)\,dx\\
&\,\,\,\,\,\,+ \int_{\mathbb{R}^N} \frac{d}{dt} \left(\int_0^ts^\sigma S(t-s)w_1(x)\,ds\varphi(t,x)\right)\,dx
\end{aligned}
\end{equation}
and
\begin{equation}\label{54}
\begin{aligned}
&\frac{d}{dt} \int_{\mathbb{R}^N}v(t,x)\varphi(t,x)\,dx\\
&=\int_{\mathbb{R}^N}\frac{d}{dt} \left(S(t)v_0(x)\varphi(t,x)\right)\,dx+\int_{\mathbb{R}^N}\frac{d}{dt} \left(\int_0^tS(t-s)|u(s)|^{q}\,ds\varphi(t,x)\right)\,dx\\
&\,\,\,\,\,\,+\int_{\mathbb{R}^N}\frac{d}{dt} \left(\int_0^ts^\gamma S(t-s)w_2(x)\,ds\varphi(t,x)\right)\,dx.
\end{aligned}
\end{equation}
As $u_0,w_1, |v(s)|^p\in C_0(\mathbb{R}^N)$, for all $s\in[0,T]$, it follows from the properties of the semigroup $S(t)$ (see \cite[Theorem~1, p.~47]{Evans}) that
\begin{equation}\label{47}
\begin{aligned}
&\int_{\mathbb{R}^N} \frac{d}{dt}\left(S(t)u_0(x)\varphi(t,x)\right)\,dx
\\
&=\int_{\mathbb{R}^N}\Delta\left(S(t)u_0(x)\right)\varphi(t,x)\,dx+ \int_{\mathbb{R}^N} S(t)u_0(x)\varphi_t(t,x)\,dx\\
&=\int_{\mathbb{R}^N} S(t)u_0(x)\Delta\varphi(t,x)\,dx+ \int_{\mathbb{R}^N} S(t)u_0(x)\varphi_t(t,x)\,dx,
\end{aligned}
\end{equation}

\begin{equation}\label{48}
\begin{aligned}
&\int_{\mathbb{R}^N} \frac{d}{dt}\left(\int_0^tS(t-s)|v(s)|^p\,ds\varphi(t,x)\right)\,dx\\
&=\int_{\mathbb{R}^N}|v(t,x)|^p\varphi(t,x)\,dx+\int_{\mathbb{R}^N}\int_0^t\Delta\left(S(t-s)|v(s,x)|^p\right)\,ds\varphi(t,x)\,dx\\
&\,\,\,\,\,\,+\int_{\mathbb{R}^N}\int_0^tS(t-s)|v(s,x)|^p\,ds\varphi_t(t,x)\,dx\\
&=\int_{\mathbb{R}^N}|v(t,x)|^p\varphi(t,x)\,dx+\int_{\mathbb{R}^N}\int_0^tS(t-s)|v(s,x)|^p\,ds\Delta\varphi(t,x)\,dx\\
&\,\,\,\,\,\,+\int_{\mathbb{R}^N}\int_0^tS(t-s)|v(s,x)|^p\,ds\varphi_t(t,x)\,dx
\end{aligned}
\end{equation}
and
\begin{equation}\label{49}
\begin{aligned}
&\int_{\mathbb{R}^N}\frac{d}{dt} \left(\int_0^ts^\sigma S(t-s)w_1(x)\,ds\varphi(t,x)\right)\,dx\\
&=\int_{\mathbb{R}^N}t^\sigma w_1(x)\varphi(t,x)\,dx+\int_{\mathbb{R}^N}\int_0^ts^\sigma\Delta\left(S(t-s)w_1(x)\right)\,ds\varphi(t,x)\,dx\\
&\,\,\,\,\,\,+\int_{\mathbb{R}^N}\int_0^ts^\sigma S(t-s)w_1(x)\,ds\varphi_t(t,x)\,dx\\
&=\int_{\mathbb{R}^N}t^\sigma w_1(x)\varphi(t,x)\,dx+\int_{\mathbb{R}^N}\int_0^ts^\sigma S(t-s)w_1(x)\,ds\Delta\varphi(t,x)\,dx\\
&\,\,\,\,\,\,+\int_{\mathbb{R}^N}\int_0^ts^\sigma S(t-s)w_1(x)\,ds\varphi_t(t,x)\,dx.
\end{aligned}
\end{equation}
Similarly, one has
 \begin{equation}\label{50}
\int_{\mathbb{R}^N} \frac{d}{dt}\left(S(t)v_0(x)\varphi(t,x)\right) \,dx=\int_{\mathbb{R}^N} S(t)v_0(x)\Delta\varphi(t,x)\,dx+ \int_{\mathbb{R}^N} S(t)v_0(x)\varphi_t(t,x)\,dx,
\end{equation}
\begin{equation}\label{51}
\begin{aligned}
&\int_{\mathbb{R}^N} \frac{d}{dt}\left(\int_0^tS(t-s)|u(s)|^q\,ds\varphi(t,x)\right)\,dx\\
&=\int_{\mathbb{R}^N}|u(t,x)|^q\varphi(t,x)\,dx+\int_{\mathbb{R}^N}\int_0^tS(t-s)|u(s,x)|^q\,ds\Delta\varphi(t,x)\,dx\\
&\,\,\,\,\,\,+\int_{\mathbb{R}^N}\int_0^tS(t-s)|u(s,x)|^q\,ds\varphi_t(t,x)\,dx
\end{aligned}
\end{equation}
and
\begin{equation}\label{52}
\begin{aligned}
&\int_{\mathbb{R}^N}\frac{d}{dt} \left(\int_0^ts^\gamma S(t-s)w_2(x)\,ds\varphi(t,x)\right)\,dx\\
&=\int_{\mathbb{R}^N}t^\gamma w_2(x)\varphi(t,x)\,dx+\int_{\mathbb{R}^N}\int_0^ts^\gamma S(t-s)w_2(x)\,ds\Delta\varphi(t,x)\,dx\\
&\,\,\,\,\,\,+\int_{\mathbb{R}^N}\int_0^ts^\gamma S(t-s)w_2(x)\,ds\varphi_t(t,x)\,dx.
\end{aligned}
\end{equation}
Thus, using \eqref{MILD}, \eqref{47}--\eqref{52}, one deduces from \eqref{53} and \eqref{54} that 
\begin{eqnarray*}
\frac{d}{dt} \int_{\mathbb{R}^N}u(t,x)\varphi(t,x)\,dx&=& \int_{\mathbb{R}^N} u(t,x)\Delta\varphi(t,x)\,dx+ \int_{\mathbb{R}^N} u(t,x)\varphi_t(t,x)\,dx\\
&&+ \int_{\mathbb{R}^N}|v(t,x)|^p\varphi(t,x)\,dx+\int_{\mathbb{R}^N}t^\sigma w_1(x)\varphi(t,x)\,dx
\end{eqnarray*}
 and
\begin{eqnarray*}
\frac{d}{dt} \int_{\mathbb{R}^N}v(t,x)\varphi(t,x)\,dx&=& \int_{\mathbb{R}^N} v(t,x)\Delta\varphi(t,x)\,dx+ \int_{\mathbb{R}^N} v(t,x)\varphi_t(t,x)\,dx\\
&&+ \int_{\mathbb{R}^N}|u(t,x)|^q\varphi(t,x)\,dx+\int_{\mathbb{R}^N}t^\gamma w_2(x)\varphi(t,x)\,dx.
\end{eqnarray*}
Finally, integrating in time over  $[0,T]$ and using the fact that  
$\mbox{supp}(\varphi)\subset (0,T)\times \mathbb{R}^N$,  one obtains that $(u,v)$ satisfies \eqref{weaksolution1} and \eqref{weaksolution2}, i.e. $(u,v)$ is a weak solution to  \eqref{1}--\eqref{Initialcondition1}.
\end{proof}

\begin{proof}[Proof of Theorem \ref{Blow-up}]
(i) Suppose, on the contrary, that $(u,v)$ is a global mild solution to \eqref{1}--\eqref{Initialcondition1}. Then, for all $T\gg 1$, 
 $(u,v)\in C([0,T], C_0(\mathbb{R}^N) \times C_0(\mathbb{R}^N))$ solves 
\eqref{MILD}. By Lemma \ref{MildWeak}, one deduces that $(u,v)$ solves \eqref{weaksolution1}-\eqref{weaksolution2}, for all $T\gg 1$ and any compactly supported test function $\varphi\in C^1([0,T],C^2(\mathbb{R}^N))$  with $\mbox{supp}(\varphi)\subset (0,T)\times \mathbb{R}^N$. 

We introduce the cut-off functions $\xi_i\in C^\infty([0,\infty))$, $i=1,2$, satisfying 
$$
\xi_1\geq0,\quad \xi_1\not\equiv 0,\quad \mbox{supp}(\xi_1)\subset (0,1)
$$
and 
$$
0\leq \xi_2\leq 1,\quad  \xi_2\equiv 1 \mbox{ in }[0,1],\quad  \xi_2\equiv 0 \mbox{ in } [2,\infty).
$$
Next, for $T\gg1 $, we take
\begin{equation}\label{testf}
\varphi(t,x)=\varphi_1(t) \varphi_2(x),\quad (t,x)\in [0,T]\times \mathbb{R}^N,
\end{equation}
where 
$$
\varphi_1(t)=\xi_1\left(\frac{t}{T}\right)^\ell,\quad 0\leq t\leq T,
$$

$$
\varphi_2(x)=\xi_2\left(\frac{|x|^2}{T}\right)^\ell,\quad x\in \mathbb{R}^N
$$
and  $\ell\gg 1$. Clearly, $\varphi\in C^1([0,T],C^2(\mathbb{R}^N))$ is a compactly supported function and $\mbox{supp}(\varphi)\subset (0,T)\times \mathbb{R}^N$.  Hence, using 
\eqref{weaksolution1} and \eqref{weaksolution2}, one obtains

\begin{equation}\label{21}
\begin{aligned}
&\int_{\Omega_T}|v|^p(t,x)\varphi(t,x)\,dx\,dt+\int_{\Omega_T}t^\sigma w_1(x)\varphi(t,x)\,dx\,dt\\
&\leq \int_{\Omega_T}|u(t,x)| |\Delta\varphi(t,x)|\,dx\,dt+\int_{\Omega_T}|u(t,x)| |\varphi_t(t,x)|\,dx\,dt
\end{aligned}
\end{equation}
and
\begin{equation}\label{22}
\begin{aligned}
&\int_{\Omega_T}|u|^q(t,x)\varphi(t,x)\,dx\,dt+\int_{\Omega_T}t^\gamma w_2(x)\varphi(t,x)\,dx\,dt\\
&\leq \int_{\Omega_T}|v(t,x)| |\Delta\varphi(t,x)|\,dx\,dt
+\int_{\Omega_T}|v(t,x)||\varphi_t(t,x)|\,dx\,dt,
\end{aligned}
\end{equation}
where  $\Omega_T=(0,T)\times \mathbb{R}^N$. 

We claim that 
\begin{equation}\label{20}
\int_{\Omega_T}t^\sigma w_1(x)\varphi(t,x)\,dx\,dt\geq C\,T^{\sigma+1}\int_{\mathbb{R}^N}w_1(x)\,dx 
\end{equation}
and
\begin{equation}\label{20'}
\int_{\Omega_T}t^\gamma w_2(x)\varphi(t,x)\,dx\,dt\geq C\,T^{\gamma+1}\int_{\mathbb{R}^N}w_2(x)\,dx,
\end{equation}
where we denote by $C$ a positive constant (independent of $T$), whose value may change from line to line.

Indeed, by \eqref{testf}, one has
\begin{equation}\label{16}
\int_{\Omega_T}t^\sigma w_1(x)\varphi(t,x)\,dx\,dt =\left(\int_0^Tt^\sigma\xi_1\left(\frac{t}{T}\right)^\ell\,dt\right)\left(\int_{\mathbb{R}^N}w_1(x)\xi_2\left(\frac{|x|^2}{T}\right)^\ell\,dx\right).
\end{equation}
By the dominated convergence theorem and due to the fact that $w_1\in L^1(\mathbb{R}^N)$, we get
$$
\lim_{T\rightarrow\infty}\int_{\mathbb{R}^N}w_1(x)\xi_2\left(\frac{|x|^2}{T}\right)^\ell\,dx=\int_{\mathbb{R}^N}w_1(x)\,dx>0,
$$
which implies, for sufficiently large $T>0$, that
\begin{equation}\label{17}
\int_{\mathbb{R}^N}w_1(x)\xi_2\left(\frac{|x|^2}{T}\right)^\ell\,dx\geq\frac{1}{2}\int_{\mathbb{R}^N}w_1(x)\,dx.
\end{equation}
On the other hand, one has
\begin{equation}\label{19}
\int_0^T t^\sigma \xi_1\left(\frac{t}{T}\right)^\ell\,dt=T^{\sigma+1}\int_0^1s^\sigma \xi_1^\ell(s)\,ds.
\end{equation}
Combining \eqref{16}, \eqref{17} and \eqref{19}, one obtains \eqref{20}.
Using the same argument, \eqref{20'} follows.

Next, to estimate the right-hand sides of \eqref{21}--\eqref{22}, we use H\"{o}lder's inequality to obtain
\begin{equation}\label{esF1}
\begin{aligned}
&\int_{\Omega_T}|u(t,x)||\varphi_t(t,x)|\,dx\,dt\\
&\leq \left(\int_{\Omega_T}|u(t,x)|^q\varphi(t,x)\,dx\,dt\right)^{\frac{1}{q}}\left(\int_{\Omega_T}\varphi(t,x)^{\frac{-1}{q-1}}|\varphi_t(t,x)|^{\frac{q}{q-1}}\,dx\,dt\right)^{\frac{q-1}{q}}
\end{aligned}
\end{equation}
and
\begin{equation}\label{esF2}
\begin{aligned}
&\int_{\Omega_T}|u(t,x)||\Delta\varphi(t,x)|\,dx\,dt\\
&\leq \left(\int_{\Omega_T}|u(t,x)|^q\varphi(t,x)\,dx\,dt\right)^{\frac{1}{q}}\left(\int_{\Omega_T}\varphi(t,x)^{\frac{-1}{q-1}}|\Delta\varphi(t,x)|^{\frac{q}{q-1}}\,dx\,dt\right)^{\frac{q-1}{q}}.
\end{aligned}
\end{equation}
Similarly, one has
\begin{equation}\label{esFv1}
\begin{aligned}
&\int_{\Omega_T}|v(t,x)||\varphi_t(t,x)|\,dx\,dt\\
&\leq \left(\int_{\Omega_T}|v(t,x)|^p\varphi(t,x)\,dx\,dt\right)^{\frac{1}{p}}\left(\int_{\Omega_T}\varphi(t,x)^{\frac{-1}{p-1}}|\varphi_t(t,x)|^{\frac{p}{p-1}}\,dx\,dt\right)^{\frac{p-1}{p}}
\end{aligned}
\end{equation}
and
\begin{equation}\label{esFv2}
\begin{aligned}
&\int_{\Omega_T}|v(t,x)||\Delta\varphi(t,x)|\,dx\,dt\\
&\leq \left(\int_{\Omega_T}|v(t,x)|^p\varphi(t,x)\,dx\,dt\right)^{\frac{1}{p}}\left(\int_{\Omega_T}\varphi(t,x)^{\frac{-1}{p-1}}|\Delta\varphi(t,x)|^{\frac{p}{p-1}}\,dx\,dt\right)^{\frac{p-1}{p}}.
\end{aligned}
\end{equation}
Hence, using \eqref{21}, \eqref{20}, \eqref{esF1} and \eqref{esF2}, one deduces that 
\begin{equation}\label{25}
\int_{\Omega_T}|v(t,x)|^p\varphi(t,x)\,dx\,dt+C\,T^{\sigma+1}\int_{\mathbb{R}^N}w_1(x)\,dx\leq\left(\int_{\Omega_T}|u(t,x)|^q\varphi\,dx\,dt\right)^{\frac{1}{q}}\mathcal{A},
\end{equation}
where
$$
\mathcal{A}=C\left(\left(\int_{\Omega_T}\varphi(t,x)^{\frac{-1}{q-1}}|\varphi_t(t,x)|^{\frac{q}{q-1}}\,dx\,dt\right)^{\frac{q-1}{q}}
+\left(\int_{\Omega_T}\varphi(t,x)^{\frac{-1}{q-1}}|\Delta\varphi(t,x)|^{\frac{q}{q-1}}\,dx\,dt\right)^{\frac{q-1}{q}}\right). 
$$
Similarly, using \eqref{22}, \eqref{20'}, \eqref{esFv1}, \eqref{esFv2}, one obtains
\begin{equation}\label{26}
\int_{\Omega_T}|u(t,x)|^q\varphi(t,x)\,dx\,dt+C\,T^{\gamma+1}\int_{\mathbb{R}^N}w_2(x)\,dx\leq\left(\int_{\Omega_T}|v(t,x)|^p\varphi\,dx\,dt\right)^{\frac{1}{p}}\mathcal{B},
\end{equation}
where
$$
\mathcal{B}=C\left(\left(\int_{\Omega_T}\varphi(t,x)^{\frac{-1}{p-1}}|\varphi_t(t,x)|^{\frac{p}{p-1}}\,dx\,dt\right)^{\frac{p-1}{p}}
+\left(\int_{\Omega_T}\varphi(t,x)^{\frac{-1}{p-1}}|\Delta\varphi(t,x)|^{\frac{p}{p-1}}\,dx\,dt\right)^{\frac{p-1}{p}}\right). 
$$
Combining \eqref{25} with \eqref{26}, it holds that 
\begin{equation}\label{eqest1}
\mathcal{I}+ T^{\sigma+1}\int_{\mathbb{R}^N}w_1(x)\,dx\leq C\mathcal{I}^{\frac{1}{pq}}\mathcal{B}^{\frac{1}{q}} \mathcal{A}
\end{equation}
and
\begin{equation}\label{eqest2}
\mathcal{J}+ T^{\gamma+1}\int_{\mathbb{R}^N}w_2(x)\,dx\leq C\mathcal{J}^{\frac{1}{pq}}\mathcal{A}^{\frac{1}{p}} \mathcal{B},
\end{equation}
where
$$
\mathcal{I}=\int_{\Omega_T}|v(t,x)|^p\varphi(t,x)\,dx\,dt\quad\mbox{and}\quad  \mathcal{J}=\int_{\Omega_T}|u(t,x)|^q\varphi(t,x)\,dx\,dt.
$$
Next, by Young's inequality, one obtains
\begin{equation}\label{Young1}
C\mathcal{I}^{\frac{1}{pq}}\mathcal{B}^{\frac{1}{q}} \mathcal{A}\leq \frac{1}{pq} \mathcal{I}+ C \mathcal{A}^{\frac{pq}{pq-1}} \mathcal{B}^{\frac{p}{pq-1}}.  
\end{equation}
Similarly,
\begin{equation}\label{Young2}
C\mathcal{J}^{\frac{1}{pq}}\mathcal{A}^{\frac{1}{p}} \mathcal{B}\leq \frac{1}{pq} \mathcal{J}+ C \mathcal{B}^{\frac{pq}{pq-1}} \mathcal{A}^{\frac{q}{pq-1}}.  
\end{equation}
It follows from \eqref{eqest1} and \eqref{Young1} that
\begin{equation}\label{Neweq1}
T^{\sigma+1}\int_{\mathbb{R}^N}w_1(x)\,dx\leq C \mathcal{A}^{\frac{pq}{pq-1}} \mathcal{B}^{\frac{p}{pq-1}}.
\end{equation}
Similarly, using \eqref{eqest2} and \eqref{Young2}, one obtains 
\begin{equation}\label{Neweq2}
T^{\gamma+1}\int_{\mathbb{R}^N}w_2(x)\,dx\leq C \mathcal{B}^{\frac{pq}{pq-1}} \mathcal{A}^{\frac{q}{pq-1}}.
\end{equation}
On the other hand, using \eqref{testf}, for $r>1$, one obtains
$$
\begin{aligned}
&\int_{\Omega_T}\varphi(t,x)^{\frac{-1}{r-1}}|\varphi_t(t,x)|^{\frac{r}{r-1}}\,dx\,dt\\
&= \left(\int_0^T \varphi_1(t)^{\frac{-1}{r-1}}|\varphi_1'(t)|^{\frac{r}{r-1}}\,dt\right) \left(\int_{\mathbb{R}^N}\varphi_2(x)\,dx\right)\\
&=\left(\int_0^T  \xi_1\left(\frac{t}{T}\right)^{\frac{-\ell}{r-1}} \left|\frac{d}{dt} \,\xi_1\left(\frac{t}{T}\right)^\ell\right|^{\frac{r}{r-1}}\,dt\right) \left(\int_{\mathbb{R}^N}\xi_2\left(\frac{|x|^2}{T}\right)^\ell\,dx\right)\\
&=C T^{\frac{-1}{r-1}} \left(\int_0^1 \xi_1(s)^{\ell-\frac{r}{r-1}} |\xi_1'(s)|^{\frac{r}{r-1}}\,ds \right)T^{\frac{N}{2}} \left(\int_{\mathbb{R}^N}\xi_2(|y|^2)^\ell\,dy\right)\\
&=CT^{\frac{N}{2}-\frac{1}{r-1}},
\end{aligned}
$$
which yields
\begin{equation}\label{bahi1}
\left(\int_{\Omega_T}\varphi(t,x)^{\frac{-1}{r-1}}|\varphi_t(t,x)|^{\frac{r}{r-1}}\,dx\,dt\right)^{\frac{r-1}{r}}= C T^{\frac{N}{2}\left(\frac{r-1}{r}\right)-\frac{1}{r}},\quad r>1.
\end{equation}
Using similar calculations, one obtains
\begin{equation}\label{bahi2}
\left(\int_{\Omega_T}\varphi(t,x)^{\frac{-1}{r-1}}|\Delta\varphi(t,x)|^{\frac{r}{r-1}}\,dx\,dt\right)^{\frac{r-1}{r}}=CT^{\frac{N}{2}\left(\frac{r-1}{r}\right)-\frac{1}{r}},\quad r>1.
\end{equation}
Taking $r=q$ in \eqref{bahi1}--\eqref{bahi2}, it holds
\begin{equation}\label{esA}
\mathcal{A}=C T^{\frac{N}{2}\left(\frac{q-1}{q}\right)-\frac{1}{q}}.
\end{equation}
Similarly, taking $r=p$ in \eqref{bahi1}--\eqref{bahi2}, it holds
\begin{equation}\label{esB}
\mathcal{B}=C T^{\frac{N}{2}\left(\frac{p-1}{p}\right)-\frac{1}{p}}.
\end{equation}
Next, \eqref{Neweq1}, \eqref{esA} and \eqref{esB} yield
$$
T^{\sigma+1}\int_{\mathbb{R}^N}w_1(x)\,dx\leq C T^{\frac{N}{2}-\frac{p+1}{pq-1}},
$$
i.e.
\begin{equation}\label{puissance1}
\int_{\mathbb{R}^N}w_1(x)\,dx\leq C  T^{\theta_1},
\end{equation}
where
$$
\theta_1=\frac{N}{2}-\frac{p(q+1)}{pq-1}-\sigma.
$$
Similarly, \eqref{Neweq2}, \eqref{esA} and \eqref{esB} yield
\begin{equation}\label{puissance2}
\int_{\mathbb{R}^N}w_2(x)\,dx\leq C  T^{\theta_2},
\end{equation}
where
$$
\theta_2=\frac{N}{2}-\frac{q(p+1)}{pq-1}-\gamma.
$$
Note that \eqref{C1} is equivalent to $\theta_1<0$ or $\theta_2<0$. If $\theta_1<0$, passing to the limit as $T\to \infty$ in \eqref{puissance1}, one obtains $\displaystyle \int_{\mathbb{R}^N}w_1(x)\,dx\leq 0$. If $\theta_2<0$, passing to the limit as $T\to \infty$ in \eqref{puissance2}, one obtains $\displaystyle \int_{\mathbb{R}^N}w_2(x)\,dx\leq 0$. Therefore, in both cases, one obtains a contradiction with the fact that $\displaystyle \int_{\mathbb{R}^N}w_i(x)\,dx>0$, $i=1,2$.  This completes the proof of  part (i) of Theorem \ref{Blow-up}.\\
(ii) As in the proof of (i), we argue by contradiction by supposing that $(u,v)$ is a global mild solution to \eqref{1}--\eqref{Initialcondition1}. Repeating the same calculations made previously using the test function \eqref{testf} with 
$$
\varphi_2(x)=\xi_2\left(\frac{|x|^2}{R^2}\right)^\ell,\quad x\in \mathbb{R}^N,
$$
where  $1\ll R<T$ is a large positive constant independent on $T$, one obtains
\begin{equation}\label{fesaya1}
\int_{\mathbb{R}^N}w_1(x)\,dx\leq C\left(T^{-\sigma-\frac{p(q+1)}{pq-1}}R^N+T^{-\sigma-\frac{p}{pq-1}}R^{N-\frac{2pq}{pq-1}}+T^{-\sigma-\frac{pq}{pq-1}}R^{N-\frac{2p}{pq-1}}+T^{-\sigma}R^{N-\frac{2pq}{pq-1}}\right)
\end{equation}
and
\begin{equation}\label{fesaya2}
\int_{\mathbb{R}^N}w_2(x)\,dx\leq C\left(T^{-\gamma-\frac{q(p+1)}{pq-1}}R^N+T^{-\gamma-\frac{q}{pq-1}}R^{N-\frac{2pq}{pq-1}}+T^{-\sigma-\frac{pq}{pq-1}}R^{N-\frac{2q}{pq-1}}+T^{-\gamma}R^{N-\frac{2pq}{pq-1}}\right).
\end{equation}
If $\sigma>0$, fixing $R$ and passing to the limit as $T\rightarrow\infty$
in \eqref{fesaya1}, one obtains $\displaystyle \int_{\mathbb{R}^N}w_1(x)\,dx\leq 0$. Similarly, if  $\gamma>0$, fixing $R$ and passing to the limit as $T\rightarrow\infty$
in \eqref{fesaya2}, one obtains $\displaystyle \int_{\mathbb{R}^N}w_2(x)\,dx\leq 0$.  Hence, in both cases, one has a contradiction with the fact that $\displaystyle \int_{\mathbb{R}^N}w_i(x)\,dx>0$, $i=1,2$.  This completes the proof of   Theorem \ref{Blow-up}.
\end{proof}

\section{Global existence}\label{sec4}

In this section, we give the proof of the global existence result stated by Theorem \ref{global}. Jsut before, we need some Lemmas.

\begin{lem}\label{lemmaouf1}
Let $\sigma,\gamma \in (-1,0)$ and $p>1$. If 
\begin{equation}\label{GFI1}
N\geq \frac{2[(pq-1)(1+\gamma)+q+1]}{pq-1}\quad\mbox{and}\quad q>\max\left\{\frac{\gamma}{\sigma},1\right\},
\end{equation}
then
\begin{equation}\label{39}
2q[\sigma(pq-1)+p+1] -N(pq-1)<0,
\end{equation}
\begin{equation}\label{45}
2q(p+1) -(N+2q)(pq-1)<0
\end{equation}
and
\begin{equation}\label{46}
2q[\gamma p(pq-1)+p+1] -N(pq-1)<0.
\end{equation}
\end{lem}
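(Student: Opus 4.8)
The plan is to reduce each of the three inequalities \eqref{39}, \eqref{45}, \eqref{46} to an elementary consequence of the two hypotheses in \eqref{GFI1}, with no analysis involved. First I would abbreviate $A := pq-1$, which is strictly positive since $p,q>1$, and rewrite the hypothesis $N\geq \frac{2[(pq-1)(1+\gamma)+q+1]}{pq-1}$ in cleared form as
$$
NA \;\geq\; 2A(1+\gamma) + 2(q+1).
$$
Since $1+\gamma>0$ (because $\gamma\in(-1,0)$) and $q>1$, the right-hand side is positive; in particular this also yields the weaker bounds $NA > 2(q+1)$ and $NA > 2A(1+\gamma)$, and these together with the displayed inequality are all the proof uses.

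For \eqref{39}, note that it is equivalent to $NA > 2q(\sigma A + p+1)$, so by the displayed inequality it suffices to show $2A(1+\gamma)+2(q+1) > 2q(\sigma A+p+1)$. Expanding and substituting $A=pq-1$, everything cancels down to $(pq-1)(\gamma - q\sigma)>0$, i.e. $\gamma>q\sigma$. Here is the one step that is not purely mechanical: the hypothesis $q>\gamma/\sigma$ must be multiplied by $\sigma$, which is \emph{negative}, so the inequality flips to $q\sigma<\gamma$, which is exactly what is needed. For \eqref{46}, the same reduction --- now to $2A(1+\gamma)+2(q+1)>2q(\gamma p A+p+1)$ --- collapses after substituting $A=pq-1$ to $-\gamma(pq-1)^2>0$, which holds because $\gamma<0$; this part uses neither $\sigma$ nor the lower bound on $q$.

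For \eqref{45}, which is equivalent to $(N+2q)A > 2q(p+1)$, I would use only the weak bound $NA>2(q+1)$: then
$$
(N+2q)A \;=\; NA + 2qA \;>\; 2(q+1) + 2q(pq-1) \;=\; 2 + 2pq^2,
$$
and it remains to check $2+2pq^2\geq 2q(p+1)$, which rearranges to $(q-1)(pq-1)\geq 0$, true since $q>1$ and $pq>1$. Assembling the three blocks finishes the proof. I do not expect a genuine obstacle here; the only point that demands care is keeping the sign straight in $q>\gamma/\sigma\ \Longleftrightarrow\ q\sigma<\gamma$ and making sure that at each reduction one only weakens the target inequality rather than strengthening it.
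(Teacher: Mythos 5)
Your proof is correct and follows essentially the same route as the paper: the paper substitutes the lower bound for $N$ via a decreasing linear function $g(s)$ in place of clearing denominators, and the resulting algebra reduces, just as in your computation, to $q\sigma<\gamma$ for \eqref{39}, to $-\gamma(pq-1)^2>0$ for \eqref{46}, and to $(q-1)(pq-1)\geq 0$ for \eqref{45}. The only difference is that the paper treats \eqref{45} and \eqref{46} as ``a similar argument,'' whereas you write them out explicitly (and your careful handling of the sign flip in $q>\gamma/\sigma\iff q\sigma<\gamma$ is exactly the key point).
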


\begin{proof}
Let 
$$
g(s)=2q[\sigma(pq-1)+p+1] -s(pq-1),\quad s\in \mathbb{R}.
$$
Since $g'(s)=-(pq-1)<0$, $s\in \mathbb{R}$, one deduces that $g$ is a decreasing function. Hence, from \eqref{GFI1}, it holds that
$$
\begin{aligned}
&2q[\sigma(pq-1)+p+1] -N(pq-1)\\
&=g(N)\\
&\leq  g \left(\frac{2[(pq-1)(1+\gamma)+q+1]}{pq-1}\right)\\
&= 2(pq-1)(q\sigma-\gamma)+2(1-q)\\
&<0,
\end{aligned}
$$
which proves \eqref{39}. Inequalities \eqref{45} and \eqref{46} follow using a similar argument. 
\end{proof}

Similarly, one has the 

\begin{lem}\label{lemmaouf2}
Let $\sigma,\gamma \in (-1,0)$ and $q>1$. If 
$$
N\geq \frac{2[(pq-1)(1+\sigma)+p+1]}{pq-1}\quad\mbox{and}\quad p>\max\left\{\frac{\sigma}{\gamma},1\right\},
$$
then
\begin{equation}\label{40}
2p[\gamma(pq-1)+q+1] -N(pq-1)<0,
\end{equation}
\begin{equation}\label{43}
2p(q+1) -(N+2p)(pq-1)<0
\end{equation}
and
\begin{equation}\label{44}
2p[\sigma q(pq-1)+q+1] -N(pq-1)<0.
\end{equation}
\end{lem}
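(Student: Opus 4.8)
The plan is to repeat the argument used for Lemma~\ref{lemmaouf1}, interchanging the roles of $(p,\sigma)$ and $(q,\gamma)$. Each of \eqref{40}, \eqref{43} and \eqref{44} asserts that an affine function of $N$ with negative leading coefficient is negative; since such a function is decreasing in $N$, it suffices to verify each inequality at the smallest admissible value $N_0:=\frac{2[(pq-1)(1+\sigma)+p+1]}{pq-1}$ permitted by the hypothesis $N\geq\frac{2[(pq-1)(1+\sigma)+p+1]}{pq-1}$, and then to simplify the resulting expression into a product whose sign is transparent.

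For \eqref{40}, I would set $h(s)=2p[\gamma(pq-1)+q+1]-s(pq-1)$. Then $h'(s)=-(pq-1)<0$, so $h$ is decreasing and $2p[\gamma(pq-1)+q+1]-N(pq-1)=h(N)\leq h(N_0)$. After the obvious cancellations, the expansion of $h(N_0)$ collapses to $h(N_0)=2(pq-1)(p\gamma-\sigma)$. Because $\gamma<0$, multiplying the hypothesis $p>\sigma/\gamma$ through by $\gamma$ reverses the inequality and yields $p\gamma<\sigma$, i.e. $p\gamma-\sigma<0$; hence $h(N_0)<0$, which proves \eqref{40}.

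The remaining two inequalities are handled identically. For \eqref{43}, apply the monotonicity argument to $\phi(s)=2p(q+1)-(s+2p)(pq-1)$, which is again decreasing in $s$; a short computation reduces $\phi(N_0)$ to $-2(pq-1)(p+\sigma)$, which is negative since $p>1$ and $\sigma>-1$ force $p+\sigma>0$. For \eqref{44}, apply it to $\psi(s)=2p[\sigma q(pq-1)+q+1]-s(pq-1)$; here $\psi(N_0)$ simplifies to $2\sigma(pq-1)^2$, which is negative simply because $\sigma<0$ — so \eqref{44} in fact uses only $\sigma\in(-1,0)$ and none of the other hypotheses.

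There is no genuine obstacle in this lemma: it is elementary algebra, and the only points requiring care are the bookkeeping in the three expansions of $h(N_0)$, $\phi(N_0)$ and $\psi(N_0)$, and the sign reversal incurred when multiplying the inequality $p>\sigma/\gamma$ by the negative number $\gamma$.
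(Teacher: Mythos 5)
Your proposal is correct and follows essentially the same route as the paper: the paper proves Lemma \ref{lemmaouf1} by exactly this monotonicity-in-$N$ argument (evaluate the decreasing affine function at the smallest admissible $N$) and then disposes of Lemma \ref{lemmaouf2} with the word ``similarly,'' which is precisely the symmetric computation you carry out. Your three evaluations $h(N_0)=2(pq-1)(p\gamma-\sigma)$, $\phi(N_0)=-2(pq-1)(p+\sigma)$ and $\psi(N_0)=2\sigma(pq-1)^2$ check out, as do the sign arguments (including the reversal when multiplying $p>\sigma/\gamma$ by $\gamma<0$).
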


\begin{lem}\label{lemmaouff}
Let $\sigma,\gamma \in (-1,0)$. If \eqref{C3}--\eqref{CETR} are satisfied, then there exist $q_1,q_2>0$ such that 
\begin{eqnarray}\label{30}
\left\{\begin{array}{llll}
\displaystyle \max\left\{\frac{1}{qd_2},\frac{1}{k_1}-\frac{2}{N}\right\}<\frac{1}{q_1}<\min\left\{\frac{1}{d_1},\frac{1}{q}\right\},\\ \\
\displaystyle \max\left\{\frac{1}{pd_1},\frac{1}{k_2}-\frac{2}{N}\right\}<\frac{1}{q_2}<\min\left\{\frac{1}{d_2},\frac{1}{p}\right\},
\end{array}
\right.
\end{eqnarray}
and 
\begin{eqnarray}\label{31}
\left\{\begin{array}{lll}
\displaystyle \frac{p}{q_2}-\frac{1}{q_1}<\frac{2}{N},\\ \\
\displaystyle \frac{q}{q_1}-\frac{1}{q_2}<\frac{2}{N},
\end{array}
\right.
\end{eqnarray}
where $k_i, d_i$, $i=1,2$, are defined by \eqref{di} and \eqref{ki}.
\end{lem}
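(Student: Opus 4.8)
The statement is purely arithmetic: we must find positive numbers $q_1,q_2$ (equivalently, exponents $1/q_1,1/q_2$) lying in prescribed open intervals and additionally satisfying the two ``coupling'' inequalities \eqref{31}. The natural strategy is a two-step existence argument. First I would show that the two intervals appearing in \eqref{30} are nonempty, i.e.
\[
\max\left\{\frac{1}{qd_2},\frac{1}{k_1}-\frac{2}{N}\right\}<\min\left\{\frac{1}{d_1},\frac{1}{q}\right\},\qquad
\max\left\{\frac{1}{pd_1},\frac{1}{k_2}-\frac{2}{N}\right\}<\min\left\{\frac{1}{d_2},\frac{1}{p}\right\}.
\]
Each of these amounts to four scalar inequalities obtained by pairing one term from the $\max$ with one term from the $\min$. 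Plugging in the explicit values $d_1=\frac{N(pq-1)}{2(p+1)}$, $d_2=\frac{N(pq-1)}{2(q+1)}$, $k_1=\frac{N(pq-1)}{2[(pq-1)(1+\sigma)+p+1]}$, $k_2=\frac{N(pq-1)}{2[(pq-1)(1+\gamma)+q+1]}$ turns each into a polynomial inequality in $N,p,q,\sigma,\gamma$. The inequalities of the form $\frac{1}{qd_2}<\frac1q$, $\frac{1}{qd_2}<\frac1{d_1}$, etc., reduce to statements like $d_2>1$ or $pq>1$, which follow from $p,q>1$; the inequalities involving $\frac{1}{k_i}-\frac2N$ are exactly \eqref{39}, \eqref{45}, \eqref{40}, \eqref{43} (and their companions) from Lemmas \ref{lemmaouf1} and \ref{lemmaouf2}, whose hypotheses \eqref{GFI1} are precisely \eqref{C3} together with \eqref{CETR}. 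So the first step is essentially bookkeeping: match each of the eight scalar inequalities to an already-proven conclusion of Lemma \ref{lemmaouf1} or \ref{lemmaouf2}, or to $p,q>1$.

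\textbf{Second step: arranging the coupling.} Having nonempty intervals $I_1\ni 1/q_1$, $I_2\ni 1/q_2$, I must choose the two values so that also $\frac{p}{q_2}-\frac{1}{q_1}<\frac2N$ and $\frac{q}{q_1}-\frac{1}{q_2}<\frac2N$. The key observation is that these are monotone in opposite directions: the first inequality is helped by making $1/q_1$ large and $1/q_2$ small, the second by making $1/q_1$ small and $1/q_2$ large. The clean way to handle this is to push $1/q_1$ and $1/q_2$ toward particular endpoints. A good candidate: take $1/q_1$ close to $\frac{1}{d_1}$ (its allowed supremum or close to it) — no, better to aim both near the values that make the couplings slack. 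Concretely I would verify that the \emph{infimum endpoints} work: set $1/q_1$ slightly above $\max\{\frac{1}{qd_2},\frac{1}{k_1}-\frac2N\}$ and $1/q_2$ slightly above $\max\{\frac{1}{pd_1},\frac{1}{k_2}-\frac2N\}$, and check that with these near-minimal choices the left-hand sides of \eqref{31} are strictly below $2/N$; then by continuity a genuine choice inside the open intervals still satisfies \eqref{31}. Checking $\frac{q}{q_1}-\frac{1}{q_2}<\frac2N$ with $1/q_1$ near its lower bound is the favorable direction for that inequality; for $\frac{p}{q_2}-\frac1{q_1}<\frac2N$ one uses that $1/q_2$ is also near its lower bound. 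When the active lower bound is $\frac{1}{k_i}-\frac2N$, substituting gives, e.g., $\frac{p}{q_2}-\frac1{q_1}\le p\big(\frac{1}{k_2}-\frac2N\big)-\big(\frac1{k_1}-\frac2N\big)$ up to $\varepsilon$, and expanding $1/k_1,1/k_2$ reduces $<\frac2N$ to exactly inequalities \eqref{43}/\eqref{44} (resp. \eqref{45}/\eqref{46}); when the active lower bound is $\frac{1}{q d_2}$ or $\frac{1}{p d_1}$ the coupling reduces to $pq>1$-type facts. Alternatively, and perhaps more transparently, I would present it as: the four auxiliary inequalities \eqref{39}--\eqref{40} and \eqref{43}--\eqref{46} together with $p,q>1$ are \emph{equivalent} to the solvability of the linear system \eqref{30}--\eqref{31} in $(1/q_1,1/q_2)$, which is a bounded polytope; one then only needs to check that no pair of its defining half-planes is contradictory, and each such pairwise check is one of the listed inequalities.

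\textbf{Main obstacle.} The first step (nonemptiness of the intervals) is routine once Lemmas \ref{lemmaouf1}--\ref{lemmaouf2} are invoked. The real work is the coupling in \eqref{31}: one must verify that the \emph{intersection} of the product of the two intervals with the two diagonal constraints is still nonempty, and this is where the hypotheses \eqref{CETR} (the cross-exponent conditions $p>\sigma/\gamma$, $q>\gamma/\sigma$) genuinely enter — they are what make the ``opposite monotonicity'' resolvable rather than contradictory. I expect the proof to proceed by exhibiting an explicit admissible pair, e.g. choosing
\[
\frac{1}{q_1}=\max\left\{\frac{1}{qd_2},\frac{1}{k_1}-\frac{2}{N}\right\}+\varepsilon,\qquad
\frac{1}{q_2}=\max\left\{\frac{1}{pd_1},\frac{1}{k_2}-\frac{2}{N}\right\}+\varepsilon
\]
for $\varepsilon>0$ small, then splitting into the (at most four) cases according to which term realizes each maximum, and in each case reducing \eqref{31} to one of the already-established strict inequalities; finally letting $\varepsilon\to 0^+$ ensures the strict inequalities in \eqref{30} are also met. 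The bulk of the remaining effort is the case analysis and the elementary algebra of clearing denominators, which I would state as ``a direct computation using \eqref{di}, \eqref{ki} and Lemmas \ref{lemmaouf1}, \ref{lemmaouf2}'' rather than writing out in full.
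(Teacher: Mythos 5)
Your strategy is sound and it is genuinely different from the paper's. The paper sets $\alpha_1,\alpha_3$ (the two lower bounds) and $\alpha_2,\alpha_4$ (the two upper bounds), views \eqref{30}--\eqref{31} as asking that the open rectangle $\Lambda=(\alpha_1,\alpha_2)\times(\alpha_3,\alpha_4)$ meet the strip-quadrilateral $\Phi=\{0<pb-a<\tfrac2N,\ 0<qa-b<\tfrac2N\}$, and then runs a four-case geometric analysis (with figures) on the relative position of the corner $(\alpha_2,\alpha_4)$ and the lines $qa-b=\tfrac2N$, $pb-a=\tfrac2N$; this is where all of \eqref{39}--\eqref{46} from Lemmas \ref{lemmaouf1} and \ref{lemmaouf2} get used. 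You instead test a single explicit point, namely the lower-left corner $(\alpha_1+\varepsilon,\alpha_3+\varepsilon)$, and this actually works and is arguably cleaner: nonemptiness of the two intervals needs only \eqref{39}, \eqref{40} together with $p,q>1$, $\sigma,\gamma<0$ and $d_1,d_2>1$, and at the corner the two coupling inequalities, after the case split on which term realizes each maximum, reduce either to $\tfrac{1}{d_1}-\tfrac{1}{qd_2}=\tfrac{2}{Nq}<\tfrac2N$ and $\tfrac{1}{d_2}-\tfrac{1}{pd_1}=\tfrac{2}{Np}<\tfrac2N$, or (when the $\tfrac{1}{k_i}-\tfrac2N$ terms are active, bounding the other variable from below by its own $\tfrac{1}{k_j}-\tfrac2N$ term) to $(pq-1)(p\gamma-\sigma)<0$ and $(pq-1)(q\sigma-\gamma)<0$, i.e.\ exactly \eqref{CETR}. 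One correction to your sketch: these corner reductions are \emph{not} inequalities \eqref{43}/\eqref{44} or \eqref{45}/\eqref{46}; those arise in the paper's Cases 3 and 4 from pairing a lower bound of one variable with an \emph{upper} bound of the other (e.g.\ \eqref{44} is precisely $q\alpha_1-\alpha_4<\tfrac2N$ with $\alpha_1=\tfrac1{k_1}-\tfrac2N$, $\alpha_4=\tfrac1p$), and your corner choice bypasses them entirely; the inequalities you do need are the ones equivalent to \eqref{CETR}, which is consistent with your own remark that \eqref{CETR} is where the coupling is resolved. With that relabeling, and the $\varepsilon$-perturbation argument you describe (the corner inequalities are strict, so they survive a small $\varepsilon>0$), your proof is complete; what it buys is the elimination of the geometric case analysis and of \eqref{43}--\eqref{46}, at the price of a short case split over which term realizes each maximum, whereas the paper's region-intersection picture makes the structure of all admissible pairs $(1/q_1,1/q_2)$ visible rather than exhibiting just one.
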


\begin{proof}
Let
$$
\alpha_1= \max\left\{\frac{1}{qd_2},\frac{1}{k_1}-\frac{2}{N}\right\},\quad \alpha_2=\min\left\{\frac{1}{d_1},\frac{1}{q}\right\}
$$
and
$$\alpha_3= \max\left\{\frac{1}{pd_1},\frac{1}{k_2}-\frac{2}{N}\right\},\quad \alpha_4=\min\left\{\frac{1}{d_2},\frac{1}{p}\right\}.
$$
First, we shall prove that $0<\alpha_1<\alpha_2$ and $0<\alpha_3<\alpha_4$. Since $\frac{1}{qd_2}>0$, then $\alpha_1>0$. As $p,q>1$ and $d_2>1$, one deduces that $\frac{1}{qd_2}<\alpha_2$. Moreover, $\sigma<0$ implies that $\frac{1}{k_1}-\frac{2}{N}<\frac{1}{d_1}$. Furthermore, $\frac{1}{k_1}-\frac{2}{N}<\frac{1}{q}$ follows from \eqref{39}. This proves that $0<\alpha_1<\alpha_2$. Similarly, since $\frac{1}{pd_1}>0$, then $\alpha_3>0$. As $p,q>1$ and $d_1>1$, one can see that $\frac{1}{pd_1}<\alpha_4$. Moreover, $\gamma<0$ implies that $\frac{1}{k_2}-\frac{2}{N}<\frac{1}{d_2}$. Furthermore, $\frac{1}{k_2}-\frac{2}{N}<\frac{1}{p}$ follows from \eqref{40}. This proves that $0<\alpha_3<\alpha_4$. Then
$$
\Lambda:=\left\{(a,b)\in \mathbb{R}^2:\, 0<\alpha_1<a<\alpha_2,\, 0<\alpha_3<b<\alpha_4\right\}\neq \emptyset.
$$ 
Next, we have to show that 
\begin{equation}\label{aim}
\Lambda \cap \Phi \neq \emptyset,
\end{equation}
where
$$
\Phi:=\left\{(a,b)\in [0,1]\times [0,1]:\, 0<pb-a<\frac{2}{N},\, 0<qa-b<\frac{2}{N}\right\}.
$$
Note that $\Lambda$ is a rectangle in $[0,1]\times[0,1]$ with vertices $(\alpha_1,\alpha_3)$, $(\alpha_2,\alpha_3)$, $(\alpha_2,\alpha_4)$, $(\alpha_1,\alpha_4)$, while $\Phi$ is a quadrilateral in $[0,1]\times[0,1]$ with vertices $(0,0)$, $(\frac{2}{qN},0)$, $(\frac{1}{d_1},\frac{1}{d_2})$, $(0,\frac{2}{pN})$. To prove \eqref{aim}, we have to distinguish four cases.\\
$\bullet$ Case 1: $(\alpha_2,\alpha_4)=\left(\frac{1}{d_1},\frac{1}{d_2}\right)$ (see Figure ~\ref{figcase1}). As the two vertices on the right of the two areas $\Lambda$ and $\Phi$ are the same, it is obvious that $\Lambda \cap \Phi \neq \emptyset$. 
\begin{figure}[htbp]
\begin{tikzpicture}[scale=1.7]
 \draw[->] (-1,0) -- (4,0) node[below]{\tiny$a$};
 \draw[->] (0,-1) -- (0,4) node[left]{\tiny$b$};
 \draw(0,0) node[below left]{\tiny$O$};
 \draw(1,0) node[below]{\tiny$\frac{2}{qN}$}--node[below,sloped] {\tiny$qa-b=2/N$}(3,3);
 \draw[dotted](3,0)node[below]{\tiny$\alpha_2$}--(3,3);
 \draw(0,1) node[left]{\tiny$\frac{2}{pN}$};
  \draw(0,1) node[left]{\tiny$\frac{2}{pN}$}--node[above,sloped] {\tiny$pb-a=2/N$}(3,3);
   \draw[dotted](0,3)node[left]{\tiny$\alpha_4$}--(3,3);
      \draw[dotted](2,0)node[below]{\tiny$\alpha_1$}--(2,3);
  \draw[dotted](0,2)node[left]{\tiny$\alpha_3$}--(3,2);
    \draw(2,2)--(3,2);
    \draw(2,2)--(2,3);
     \draw(3,2)--(3,3);
    \draw(2,3)--(3,3);
     \fill[blue!50!cyan,opacity=0.3](2,2.33333)--(3,3)--(2.33333,2)--(2,2);
 \end{tikzpicture}
 \caption{The region $\Lambda\cap \Phi$ (case 1)}\label{figcase1}
\end{figure}
Namely, one can take
$$
\frac{N\alpha_3+2}{Nq}<a<\frac{N\alpha_1+2(p+1)}{Npq}\quad\mbox{and}\quad  \frac{N\alpha_1+2}{Np}<b<\frac{N\alpha_3+2(q+1)}{Npq}.
$$
\noindent $\bullet$ Case 2: $\alpha_2<\frac{N\alpha_4+2}{Nq}$ and $\alpha_4<\frac{N\alpha_2+2}{Np}$. Graphically (see Figure~\ref{figcase2}), it is obvious that the two areas $\Lambda$ and $\Phi$ intersect at least for one point. Namely, we can find it analytically. For example, let $(a,b)$ be such that 
$$ 
\max\left\{\alpha_1, p\alpha_4-\frac{2}{N}\right\}<a<\alpha_2\quad\mbox{and}\quad  \max\left\{\alpha_3,q\alpha_2-\frac{2}{N}\right\}<b<\alpha_4.
$$
Then $qa<q\alpha_2$ and $-b<-q\alpha_2+\frac{2}{N}$ (resp. $pb<p\alpha_4$ and $-a<-p\alpha_4+\frac{2}{N}$), which yield  $qa-b<\frac{2}{N}$ (resp. $pb-a<\frac{2}{N}$).

\begin{figure}[htbp]
\begin{tikzpicture}[scale=1.6]
 \draw[->] (-1,0) -- (4,0) node[below]{\tiny$a$};
 \draw[->] (0,-1) -- (0,4) node[left]{\tiny$b$};
 \draw(0,0) node[below left]{\tiny$O$};
 \draw(1,0) node[below]{\tiny$\frac{2}{qN}$}--node[below,sloped] {\tiny$qa-b=2/N$}(3,3);
 \draw[dotted](3,0)node[below]{\tiny$d_1^{-1}$}--(3,3);
 \draw(0,1) node[left]{\tiny$\frac{2}{pN}$};
  \draw(0,1) node[left]{\tiny$\frac{2}{pN}$}--node[above,sloped] {\tiny$pb-a=2/N$}(3,3);
   \draw[dotted](0,3)node[left]{\tiny$d_2^{-1}$}--(3,3);
      \draw[dotted](1.5,0)node[below]{\tiny$\alpha_1$}--(1.5,2.5);
        \draw[dotted](2.5,0)node[below]{\tiny$\alpha_2$}--(2.5,2.5);
  \draw[dotted](0,1.5)node[left]{\tiny$\alpha_3$}--(2.5,1.5);     
    \draw[dotted](0,2.5)node[left]{\tiny$\alpha_4$}--(2.5,2.5);   
     \draw(1.5,2.5)--(2.5,2.5);
    \draw(2.5,1.5)--(2.5,2.5);
     \draw(1.5,1.5)--(2.5,1.5);
        \draw(1.5,1.5)--(1.5,2.5);
        \fill[blue!50!cyan,opacity=0.3](1.5,2)--(2.25,2.5)--(2.5,2.5)--(2.5,2.25)--(2,1.5)--(1.5,1.5);
        \draw[->](2.666666,0)--(2.7,-0.3)node[below right]{\tiny$\frac{N\alpha_4+2}{Nq}$};
      \draw[dotted](2.5,2.5)--(2.666666,2.5);
  \draw[dotted](2.666666,2.5)--(2.666666,0);       
   \draw[dotted](2.5,2.5)--(2.5,2.666666);
  \draw[dotted](2.5,2.666666)--(0,2.666666); 
 \draw[->](0,2.666666)--(-0.3,2.7)node[left]{\tiny$\frac{N\alpha_2+2}{Np}$};      
 \end{tikzpicture}
 \caption{The region $\Lambda\cap \Phi$ (case 2)}\label{figcase2}
\end{figure}
\newpage
\noindent $\bullet$ Case 3: $\alpha_2\geq\frac{N\alpha_4+2}{Nq}$ and $\alpha_4<\frac{N\alpha_2+2}{Np}$. By \eqref{43}--\eqref{44}, one can check that $\alpha_1<\frac{N\alpha_4+2}{Nq}$. So graphically (see Figure~\ref{figcase3}),  one can see that $\Lambda\cap \Phi\neq \emptyset$. Namely, one can take $(a,b)$ such that 
$$
\max\left\{\alpha_1,p\alpha_4-\frac{2}{N}\right\}<a<\frac{Nb+2}{Nq}
$$
and
$$
\max\left\{\alpha_3,q\max\left\{\alpha_1,p\alpha_4-\frac{2}{N}\right\}-\frac{2}{N}\right\}<b<\alpha_4.
$$

\begin{figure}[htbp]
\begin{tikzpicture}[scale=1.7]
 \draw[->] (-1,0) -- (5,0) node[below]{\tiny$a$};
 \draw[->] (0,-1) -- (0,5) node[left]{\tiny$b$};
 \draw(0,0) node[below left]{\tiny$O$};
 \draw(1,0) node[below]{\tiny$\frac{2}{qN}$}--node[below,sloped] {\tiny$qa-b=2/N$}(4,4);
 \draw[dotted](4,0)node[below]{\tiny$d_1^{-1}$}--(4,4);
 \draw(0,1) node[left]{\tiny$\frac{2}{pN}$};
  \draw(0,1) node[left]{\tiny$\frac{2}{pN}$}--node[above,sloped] {\tiny$pb-a=2/N$}(4,4);
   \draw[dotted](0,4)node[left]{\tiny$d_2^{-1}$}--(4,4);
      \draw[dotted](2.5,0)node[below]{\tiny$\alpha_1$}--(2.5,2.5);
        \draw[dotted](3.5,0)node[below]{\tiny$\alpha_2$}--(3.5,2.5);
  \draw[dotted](0,1.5)node[left]{\tiny$\alpha_3$}--(3,1.5);     
    \draw[dotted](0,2.5)node[left]{\tiny$\alpha_4$}--(3,2.5);   
     \draw(2.5,2.5)--(3.5,2.5);
    \draw(3.5,2.5)--(3.5,1.5);
     \draw(2.5,2.5)--(2.5,1.5);
        \draw(2.5,1.5)--(3.5,1.5);
         \draw[->](2.875,0)--(2.9,-0.3)node[below right]{\tiny$\frac{N\alpha_4+2}{Nq}$};
      \draw[dotted](2.875,2.5)--(2.875,0);
    \draw[dotted](3.5,2.5)--(3.5,3.625);
   \draw[dotted](3.5,3.625)--(0,3.625);
  \draw[->](0,3.625)--(-0.3,3.7)node[left]{\tiny$\frac{N\alpha_2+2}{Np}$};     
   \fill[blue!50!cyan,opacity=0.3](2.5,2.5)--(2.875,2.5)--(2.5,2);
 \end{tikzpicture}
 \caption{The region $\Lambda\cap \Phi$ (case 3)}\label{figcase3}
\end{figure}

\noindent$\bullet$ Case 4: $\alpha_4\geq\frac{N\alpha_2+2}{Np}$ and $\alpha_2<\frac{N\alpha_4+2}{Nq}$. By \eqref{45}--\eqref{46}, one can check that $\alpha_3<\frac{N\alpha_2+2}{Np}$. So graphically (see Figure~\ref{figcase4}), one can see that $\Lambda\cap \Phi\neq \emptyset$. Namely, one can take $(a,b)$ such that 
$$
\max\left\{\alpha_1,p\max\left\{\alpha_3,q\alpha_2-\frac{2}{N}\right\}-\frac{2}{N}\right\}<a<\alpha_2
$$
and
$$
\max\left\{\alpha_3,q\alpha_2-\frac{2}{N}\right\}<b<\frac{Na+2}{Np}.
$$
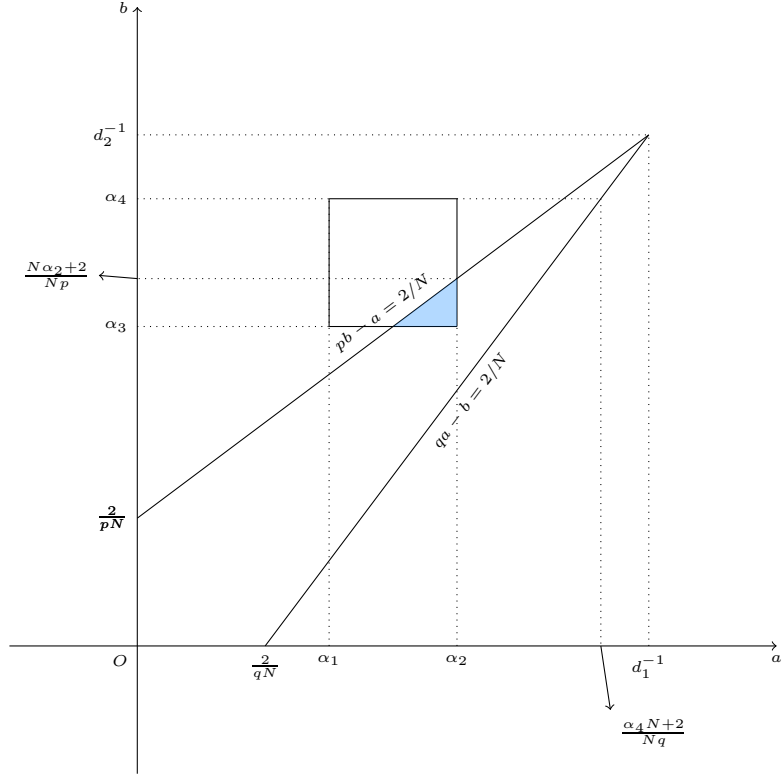
\begin{figure}[htbp]
\begin{tikzpicture}[scale=1.7]
 \draw[->] (-1,0) -- (5,0) node[below]{\tiny$a$};
 \draw[->] (0,-1) -- (0,5) node[left]{\tiny$b$};
 \draw(0,0) node[below left]{\tiny$O$};
 \draw(1,0) node[below]{\tiny$\frac{2}{qN}$}--node[below,sloped] {\tiny$qa-b=2/N$}(4,4);
 \draw[dotted](4,0)node[below]{\tiny$d_1^{-1}$}--(4,4);
 \draw(0,1) node[left]{\tiny$\frac{2}{pN}$};
  \draw(0,1) node[left]{\tiny$\frac{2}{pN}$}--node[above,sloped] {\tiny$pb-a=2/N$}(4,4);
   \draw[dotted](0,4)node[left]{\tiny$d_2^{-1}$}--(4,4);
      \draw[dotted](1.5,0)node[below]{\tiny$\alpha_1$}--(1.5,3.5);
        \draw[dotted](2.5,0)node[below]{\tiny$\alpha_2$}--(2.5,2.5);
  \draw[dotted](0,2.5)node[left]{\tiny$\alpha_3$}--(1.5,2.5);     
    \draw[dotted](0,3.5)node[left]{\tiny$\alpha_4$}--(1.5,3.5);   
     \draw(2.5,2.5)--(2.5,3.5);
    \draw(1.5,2.5)--(1.5,3.5);
     \draw(1.5,2.5)--(2.5,2.5);
        \draw(1.5,3.5)--(2.5,3.5);
     \draw[dotted](2.5,2.875)--(0,2.875);
   \draw[->](0,2.875)--(-0.3,2.9)node[left]{\tiny$\frac{N\alpha_2+2}{Np}$}; 
  \draw[dotted](2.5,3.5)--(3.625,3.5); 
  \draw[dotted](3.625,3.5)--(3.625,0); 
    \draw[->](3.625,0)--(3.7,-0.5)node[below right]{\tiny$\frac{\alpha_4 N+2}{Nq}$};     
     \fill[blue!50!cyan,opacity=0.3] (2.5,2.5)--(2,2.5)--(2.5,2.875);      
 \end{tikzpicture}
 \caption{The region $\Lambda\cap \Phi$ (case 4)}\label{figcase4}
\end{figure}
Hence, in all  cases, we proved that \eqref{aim} holds. This completes the proof of Lemma \ref{lemmaouff}.
\end{proof}

\newpage
Now, we are ready to prove the global existence.

\begin{proof}[Proof of Theorem \ref{global}]
Let $q_1,q_2>0$ be two constants satisfying \eqref{30}--\eqref{31}. Note that by Lemma \ref{lemmaouff}, such constants exist.  It follows that
\begin{equation}\label{33}
 q_1>q,\,\,  q_1>d_1>k_1\geq 1\quad\mbox{and}\quad  q_2>p,\,\,q_2>d_2>k_2\geq 1.
\end{equation}
Let
$$
\beta_1=\frac{N}{2}\left(\frac{1}{d_1}-\frac{1}{q_1}\right),\quad\beta_2=\frac{N}{2}\left(\frac{1}{d_2}-\frac{1}{q_2}\right),
$$
$$
\beta_3=\frac{N}{2}\left(\frac{p}{q_2}-\frac{1}{q_1}\right),\quad \beta_4=\frac{N}{2}\left(\frac{q}{q_1}-\frac{1}{q_2}\right)
$$
and
$$\beta_5=\frac{N}{2}\left(\frac{1}{k_1}-\frac{1}{q_1}\right),\quad \beta_6=\frac{N}{2}\left(\frac{1}{k_2}-\frac{1}{q_2}\right).
$$
 From \eqref{30}--\eqref{31}, one can easily see that
\begin{equation}\label{32}
  0<\beta_i\, (i=1,2), \quad 0<\beta_i<1\, (3\leq i\leq 6), \quad \beta_1q<1\quad\mbox{and}\quad\beta_2p<1.
 \end{equation}
 Moreover, one has
\begin{equation}\label{36}
\beta_1+\sigma-\beta_5+1=\beta_2+\sigma-\beta_6+1=\beta_1-\beta_3-\beta_2p+1=\beta_2-\beta_4-\beta_1q+1=0.
\end{equation}
As $u_0\in L^{d_1}$ and $v_0\in L^{d_2}$, using \eqref{7}, we get
\begin{eqnarray}\label{34}
\left\{\begin{array}{lll}
 \displaystyle \sup_{t>0}t^{\beta_1}\|S(t)u_0\|_{L^{q_1}}\leq  C \|u_0\|_{L^{d_1}}=\eta_1<\infty,\\ \\
\displaystyle \sup_{t>0}t^{\beta_2}\|S(t)v_0\|_{L^{q_2}}\leq C \|v_0\|_{L^{d_2}}=\eta_2<\infty.
\end{array}
\right.
\end{eqnarray}
Set
\begin{equation}\label{35}
    \Xi=\left\{U=(u,v)\in
    L^\infty\left((0,\infty),L^{q_1}(\mathbb{R}^N)\times L^{q_2}(\mathbb{R}^N)\right):\;\mbox{ess}\sup_{t>0}\left(t^{\beta_1}\|u(t)\|_{L^{q_1}}+t^{\beta_2}\|v(t)\|_{L^{q_2}}\right)\leq k\right\},
\end{equation}
where $k>0$ is to be chosen sufficiently small. We endow $\Xi$ with the metric
$$
d_{\Xi}(U,V)=\mbox{ess}\sup_{t>0}\left\{t^{\beta_1}\|u_1(t)-v_1(t)\|_{L^{q_1}}+t^{\beta_2}\|u_2(t)-v_2(t)\|_{L^{q_2}}\right\},
$$
for all $U=(u_1,u_2), V=(v_1,v_2)\in\Xi$. Then $(\Xi,d_{\Xi})$ is a complete metric space.  Let
$$
\Theta(U)=(\Theta_1(U),\Theta_2(U)),\quad U\in \Xi,
$$
where
\begin{eqnarray*}
    \Theta_1(U)(t)&=&S(t)u_0+\int_0^tS(t-s)|v(s)|^{p}\,ds+\int_0^ts^\sigma S(t-s)w_1\,ds.\\
    \Theta_2(U)(t)&=&S(t)v_0+\int_0^tS(t-s)|u(s)|^{q}\,ds+\int_0^ts^\sigma S(t-s)w_2\,ds,
\end{eqnarray*}
for  $t\geq 0$, a.e. By \eqref{7}, \eqref{33}, \eqref{32}, \eqref{36}, \eqref{34} and \eqref{35}, one has
\begin{eqnarray*}
t^{\beta_1}\|\Theta_1(U)(t)\|_{L^{q_1}}&\leq & \eta_1+Ct^{\beta_1}\int_0^t(t-s)^{-\beta_3} \|v(s)\|^p_{L^{q_2}}\,ds+Ct^{\beta_1}\int_0^ts^\sigma(t-s)^{-\beta_5} \|w_1\|_{L^{k_1}}\,ds\\
    & =& \eta_1+Ct^{\beta_1}\int_0^t(t-s)^{-\beta_3} \|v(s)\|^p_{L^{q_2}}\,ds+C t^{\beta_1+\sigma-\beta_5+1}\|w_1\|_{L^{k_1}}\\
     &\leq& \eta_1+Ck^pt^{\beta_1}\int_0^t(t-s)^{-\beta_3} s^{-\beta_2p}\,ds+C \|w_1\|_{L^{k_1}}\\
     &=& \eta_1+Ck^pt^{\beta_1-\beta_3-\beta_2p+1}+C \|w_1\|_{L^{k_1}}\\
     &=& \eta_1+Ck^p+C \|w_1\|_{L^{k_1}}
\end{eqnarray*}
and
\begin{eqnarray*}
  t^{\beta_2}\|\Theta_2(U)(t)\|_{L^{q_2}}&\leq& \eta_2+Ct^{\beta_2}\int_0^t(t-s)^{-\beta_4} \|u(s)\|^q_{L^{q_1}}\,ds+Ct^{\beta_2}\int_0^ts^\sigma(t-s)^{-\beta_6} \|w_2\|_{L^{k_2}}\,ds\\
    & =& \eta_2+Ct^{\beta_2}\int_0^t(t-s)^{-\beta_4} \|u(s)\|^q_{L^{q_1}}\,ds+C t^{\beta_2+\sigma-\beta_6+1}\|w_2\|_{L^{k_2}}\\
     &\leq& \eta_2+Ck^qt^{\beta_2}\int_0^t(t-s)^{-\beta_4} s^{-\beta_1q}\,ds+C \|w_2\|_{L^{k_2}}\\
     &=& \eta_2+Ck^q t^{\beta_2-\beta_4-\beta_1q+1}+C \|w_2\|_{L^{k_2}}\\
     &=& \eta_2 +Ck^q+C \|w_2\|_{L^{k_2}}.
\end{eqnarray*}
By choosing $k>0$ small enough and using the fact that the initial data and forcing terms are sufficiently small, we deduce that
  $$
  t^{\beta_1}\|\Phi_1(U)(t)\|_{L^{q_1}}+ t^{\beta_2}\|\Phi_2(U)(t)\|_{L^{q_2}}\leq  \eta_1+\eta_2+C \|w_1\|_{L^{k_1}}+C \|w_2\|_{L^{k_2}}+C(k^p+k^q)\leq k,
$$
which yields
$$
\Theta(\Xi)\subset \Xi.
$$
Similar calculations show that $\Theta: \Xi\to \Xi$ is a contraction, so
it has a  fixed point \break $U=(u,v)\in L^\infty\left((0,\infty),L^{q_1}(\mathbb{R}^N)\times L^{q_2}(\mathbb{R}^N)\right)$, which is a global solution to  \eqref{MILD}.

Now, we shall prove that 
\begin{equation}\label{goal}
U\in C([0,\infty),C_0(\mathbb{R}^N)\times C_0(\mathbb{R}^N)).
\end{equation}
First, let us  show that for $T>0$ small enough, one has 
$U\in C([0,T],C_0(\mathbb{R}^N)\times C_0(\mathbb{R}^N))$. Indeed, for any $T>0$ (small enough), one observes that the above argument yields uniqueness in
$$
\Xi_T=\left\{U=(u,v)\in
    L^\infty((0,T),L^{q_1}(\mathbb{R}^N)\times L^{q_2}(\mathbb{R}^N)):\;\mbox{ess}\sup_{0<t<T}\left(t^{\beta_1}\|u(t)\|_{L^{q_1}}+t^{\beta_2}\|v(t)\|_{L^{q_2}}\right)\leq k\right\}.
$$
Let $\widetilde{U}=(\tilde{u},\tilde{v})$ be the local mild solution to \eqref{1}--\eqref{Initialcondition1} constructed in Theorem \ref{local}.
Since 
$$
(u_0,v_0)\in
(C_0(\mathbb{R}^N)\times C_0(\mathbb{R}^N))\cap (L^{d_1}(\mathbb{R}^N)\times L^{d_2}(\mathbb{R}^N)),\quad q_i>d_i,\,\,  i=1,2,
$$
one deduces that  $(u_0,v_0)\in L^{q_1}(\mathbb{R}^N)\times L^{q_2}(\mathbb{R}^N)$. Hence, by assertion (iii) of Theorem \ref{local}, it holds that 
$$
\widetilde{U}\in C\left([0,T_{\max}),L^{q_1}(\mathbb{R}^N)\times L^{q_2}(\mathbb{R}^N)\right)\cap C\left([0,T_{\max}),C_0(\mathbb{R}^N)\times C_0(\mathbb{R}^N)\right).
$$
It follows that $\displaystyle\sup_{0<t<T}\left(t^{\beta_1}\|\tilde{u}(t)\|_{L^{q_1}}+t^{\beta_2}\|\tilde{v}(t)\|_{L^{q_2}}\right)\leq k$ if $T>0$ is sufficiently small. Therefore, by uniqueness, one obtains $U=\widetilde{U}$ on $[0,T]$, so that 
\begin{equation}\label{belongs1}
U\in C([0,T],C_0(\mathbb{R}^N)\times C_0(\mathbb{R}^N)).
\end{equation}
Next, using a bootstrap argument, we shall show that $U\in C([T,\infty),C_0(\mathbb{R}^N)\times C_0(\mathbb{R}^N))$. Indeed, for $t>T$, we write
\begin{eqnarray*}
u(t)-S(t)u_0-\int_0^ts^\sigma S(t-s)w_1\,ds &=&
  \int_0^TS(t-s)|v(s)|^{p}(s)\,ds+\int_T^tS(t-s)|v(s)|^{p}(s)\,ds\\
   &=:& I_1(t)+I_2(t)
\end{eqnarray*}
and
\begin{eqnarray*}
v(t)-S(t)v_0-\int_0^ts^\gamma S(t-s)w_2\,ds &=&
  \int_0^TS(t-s)|u(s)|^{q}(s)\,ds+\int_T^tS(t-s)|u(s)|^{q}(s)\,ds\\
   &=:& J_1(t)+J_2(t).
\end{eqnarray*}
On the other hand, from \eqref{belongs1}, one has $I_1,J_1\in
C([T,\infty),C_0(\mathbb{R}^N))$. Also, by the calculations used to
construct the fixed point, using the fact that $t^{-{\beta_i}}\leq
T^{-{\beta_i}}<\infty$ and  $q_i>k_i$, $i=1,2$, it follows that 
\begin{eqnarray}\label{37}
\left\{\begin{array}{lll}
I_1\in C([T,\infty),C_0(\mathbb{R}^N)\times L^{q_1}(\mathbb{R}^N)),\\ \\
J_1\in C([T,\infty),C_0(\mathbb{R}^N)\times L^{q_2}(\mathbb{R}^N))
\end{array}
\right.
\end{eqnarray}
and
\begin{eqnarray}\label{38}
\left\{\begin{array}{lll}
\displaystyle \int_0^ts^\sigma S(t-s)w_1\,ds \in C([T,\infty),C_0(\mathbb{R}^N)\cap L^{q_1}(\mathbb{R}^N)),\\ \\
\displaystyle \int_0^ts^\gamma S(t-s)w_2\,ds \in C([T,\infty),C_0(\mathbb{R}^N)\cap L^{q_2}(\mathbb{R}^N)).
\end{array}
\right.
\end{eqnarray}
Next, \eqref{31} implies that
$$
q_1<\frac{Nq_2}{Np-2q_2}\quad\mbox{and}\quad q_2<\frac{Nq_1}{Nq-2q_1}.
$$
Therefore, there exist $r_i\in(q_i,\infty]$, $i=1,2$, such that
$$
\frac{N}{2}\left(\frac{p}{q_2}-\frac{1}{r_1}\right)<1\quad\mbox{and}\quad \frac{N}{2}\left(\frac{q}{q_1}-\frac{1}{r_2}\right)<1.
$$
Then, for $\widetilde{T}>T$, using \eqref{7} and the fact that $|u|^{q}\in L^\infty((T,\widetilde{T}),L^{\frac{q_1}{q}}(\mathbb{R}^N))$ and $|v|^{p}\in L^\infty((T,\widetilde{T}),L^{\frac{q_2}{p}}(\mathbb{R}^N))$, we deduce that $I_2\in
C([T,\infty),L^{r_1}(\mathbb{R}^N))$ and $J_2\in
C([T,\infty),L^{r_2}(\mathbb{R}^N))$. By \eqref{37}--\eqref{38}, and the fact that 
$$
\left\{\begin{array}{lll}
S(t)u_0\in C([T,\infty),C_0(\mathbb{R}^N))\cap
C([T,\infty),L^{q_1}(\mathbb{R}^N)),\\\\
S(t)v_0\in C([T,\infty),C_0(\mathbb{R}^N))\cap
C([T,\infty),L^{q_2}(\mathbb{R}^N)),
\end{array}
\right.
$$
we infer that $U\in C([T,\infty),L^{r_1}(\mathbb{R}^N)\times L^{r_2}(\mathbb{R}^N))$. Iterating this procedure a finite number of times, one deduces that  
\begin{equation}\label{OKK}
U\in C([T,\infty),C_0(\mathbb{R}^N)\times C_0(\mathbb{R}^N)).
\end{equation} 
Hence, \eqref{goal} follows from \eqref{belongs1} and \eqref{OKK}. This completes the proof of Theorem \ref{global}.
\end{proof}

\vspace{1cm}

\noindent Ahmad Z. FINO\\
Department of Mathematics, Faculty of Sciences, Lebanese University, P.O. Box 826, Tripoli, Lebanon\\
E-mail: ahmad.fino01@gmail.com; afino@ul.edu.lb \\

\noindent Mohamed JLELI\\
Department of Mathematics, College of Science, King Saud University, P.O. Box 2455, Riyadh, 11451, Saudia Arabia\\
E-mail: jleli@ksu.edu.sa\\

\noindent Bessem SAMET\\
Department of Mathematics, College of Science, King Saud University, P.O. Box 2455, Riyadh, 11451, Saudia Arabia\\
E-mail: bsamet@ksu.edu.sa

\end{document}